%% file: main.tex
\newif\ifArXiV

\ArXiVtrue 

\ifArXiV
\documentclass{article}
\else
\documentclass[authoryear,preprint]{elsarticle}
\fi

\usepackage{enumitem}   
\usepackage{algorithm}
\usepackage{algpseudocode}
\usepackage[utf8]{inputenc}
\usepackage[T1]{fontenc}
\usepackage{amsmath,amssymb,mathtools}
\usepackage{amsthm}
\usepackage{amsfonts}
\usepackage{graphicx}
\usepackage{array}
\usepackage{hyperref}
\usepackage{float}
\usepackage{booktabs}
\usepackage[a4paper, left=1in,right=1in,top=1in,bottom=1in]{geometry}
\usepackage{xcolor}
\usepackage{xspace}
\usepackage{todonotes}

\usepackage{subcaption}
\usepackage{longtable}
\usepackage{pdflscape}
\usepackage[onehalfspacing]{setspace}
\usepackage[normalem]{ulem}



\newcommand{\mytag}[1]{(\hypertarget{#1}{\mathrm{#1}})}
\newcommand{\myrefmath}[1]{(\hyperlink{#1}{\mathrm{#1}})}
\newcommand{\myref}[1]{\textnormal{(\hyperlink{#1}{#1}})}

\newcommand{\epsConstraint}{$\varepsilon$\text{-constraint}\xspace}
\newcommand{\dimension}{\binom{n}{2}}
\newcommand{\fourindex}{h}

\newcommand{\BB}{B\&B\xspace}

\newcommand{\SRFLP}{SRFLP\xspace}
\newcommand{\MOSRFLP}{MOSRFLP\xspace}
\newcommand{\BOSRFLP}{BOSRFLP\xspace}

\DeclareMathOperator*{\argmin}{arg min}
\DeclareMathOperator*{\argmax}{arg max}
\DeclareMathOperator{\rank}{rank}
\DeclareMathOperator{\diag}{diag}

\newtheorem{theorem}{Theorem}

\newtheorem{definition}[theorem]{Definition}

\newtheorem{observation}[theorem]{Observation}
\newtheorem{proposition}[theorem]{Proposition}

\ifArXiV
\usepackage[affil-it]{authblk}
\usepackage[authoryear, round]{natbib}
\newenvironment{frontmatter}{}{}
\newenvironment{keyword}{\small \textbf{Keywords:}}{}
\let\address\affil
\fi

\makeatletter
\newenvironment{subalgorithms}{%
  \refstepcounter{algorithm}%
  \protected@edef\theparentalgorithm{\thealgorithm}%
  \setcounter{parentalgorithm}{\value{algorithm}}%
  \setcounter{algorithm}{0}%
  \def\thealgorithm{\theparentalgorithm\alph{algorithm}}%
  \ignorespaces
}{%
  \setcounter{algorithm}{\value{parentalgorithm}}%
  \ignorespacesafterend
}
\newcounter{parentalgorithm}
\makeatother

\begin{document}
\begin{frontmatter}
\title{An semidefinite programming-based $\varepsilon$-constraint method for the bi-objective single-row facility layout problem} 
\ifArXiV
\author[1]{Christof Brandstetter}
\author[2]{Elisabeth Gaar}
\author[1]{Markus Sinnl}
\affil[1]{Institute of Business Analytics and Technology Transformation/JKU Business School,
Johannes Kepler University Linz, Austria}
\affil[2]{Institute of Mathematics, University of Augsburg, Germany}
\date{}
\maketitle

\else
\author[jku]{Christof Brandstetter}
\ead{christof.brandstetter@jku.at}
\author[unia]{Elisabeth Gaar}
\ead{elisabeth.gaar@uni-a.de}
\author[jku]{Markus Sinnl}
\ead{markus.sinnl@jku.at}
\address[jku]{Institute of Business Analytics and Technology 
Transformation/JKU Business School, Johannes Kepler University Linz, Linz, 
Austria}
\address[unia]{Institute of Mathematics, University of Augsburg, Augsburg, 
Germany}
\fi

\begin{abstract}
In this work, we introduce a multi-objective version of the well-known single-row facility layout problem (\SRFLP). In the \SRFLP, a set of one-dimensional facilities should be placed along a single line such that the weighted sum of the center-to-center distances of each pair of facilities is minimized. In our multi-objective extension, there are multiple such weighted-sum objectives which we consider under the concept of Pareto optimality.

We develop a solution algorithm based on the $\varepsilon$-constraint method to solve the bi-objective \SRFLP. Many existing works on the $\varepsilon$-constraint method use integer linear programming (ILP) solvers in a black-box fashion for solving the problems at the individual iterations of the method. In contrast to that, we use our own branch-and-bound procedure based on semidefinite programming (SDP), as SDP relaxations are known to be more effective for solving the \SRFLP than linear programming relaxations of ILPs. This allows us to propose several enhancements procedures for our $\varepsilon$-constraint approach, such as non-binary branching and reusing of nodes within the branch-and-bound trees, which are usually not possible when using black-box solvers. We present a computational study to demonstrate the effectiveness of our solution approach and its enhancements. 

\begin{keyword}
    multiple objective programming; $\varepsilon$-constraint method; semidefinite programming; single-row facility layout problem
\end{keyword}
\end{abstract}
\end{frontmatter}

\section{Introduction}\label{sec:Introduction}
The \emph{single-row facility layout problem (\SRFLP)}, introduced by \citet{SIMMONS1969}, is a well-known optimization problem with numerous practical applications, such as arrangement of rooms in hospitals, shelves in supermarkets, and office layouts as mentioned in \citet{SIMMONS1969}. Further applications are the assignment of airplanes to gates in airport terminals as noted in \citet{SURYANARAYANAN1991}, the organization of machines in flexible manufacturing systems as mentioned in~\citet{Heragu1988} and storage optimization as described in~\citet{Picard1981}.

In this work, we introduce the \emph{multi-objective single-row facility layout problem (\MOSRFLP)}, which extends the \SRFLP to multiple objectives, allowing decision-makers to take into account various different real-life aspects. In particular, the \SRFLP optimizes only one objective function, while there might be multiple conflicting objectives in applications. For instance, in multi-product manufacturing systems, often each product requires a distinct production process, leading to as many objective functions as products when optimizing the production process for all products. Similarly, warehouse designs must compromise between optimal inbound, outbound and intermediate storage layouts, while airport gate assignments that minimize customer transfer times might result in longer walking times for the personnel. To model these applications, we consider the \MOSRFLP: Given some one-dimensional facilities of certain lengths and $p$ weights between each pair of facilities, the~\MOSRFLP seeks an arrangement of the facilities on a line, such that the sums of the $p$ weighted center-to-center distances of each pair of facilities are minimized.

For $p=1$ the \MOSRFLP coincides with the \SRFLP.
For $p>1$, i.e., in the actual multi-objective setting, there is usually not one single solution that minimizes all $p$ sums simultaneously. We therefore consider the concept of Pareto optimality, see e.g., \citet{Ehrgott2005}. A solution is Pareto optimal if no objective function value can be improved without deteriorating the value of another objective function. The corresponding vector of objective function values of such a Pareto optimal solution is called a nondominated point. Note that multiple Pareto optimal solutions can result in the same nondominated point.
The goal of the \MOSRFLP is then to determine the nondominated set, i.e., the set of all nondominated points. 

\subsection{Contribution and outline}\label{sec:contribution}
In this work, we introduce the \MOSRFLP and develop an $\varepsilon$-constraint approach to solve the problem for $p=2$, i.e., to determine the nondominated set of the bi-objective \SRFLP. Our approach uses a handcrafted semidefinite programming (SDP)-based branch-and-bound (\BB) algorithm to solve the single-objective subproblems at each iteration of the $\varepsilon$-constraint method. To the best of our knowledge, this is the first work to solve the subproblems within the $\varepsilon$-constraint method with an SDP-based \BB algorithm. Usually, these subproblems are formulated as integer linear programs (ILPs) and solved in a black-box fashion using solvers like CPLEX or Gurobi. Our SDP-based \BB algorithm allows us to investigate how the $\varepsilon$-constraint method and the \BB algorithm can be more effectively integrated, rather than being treated independently. In particular, we introduce a scheme for reusing nodes of the \BB trees. Other enhancements include various branching strategies, sequential variable fixing, node relaxation reduction and solution pooling.

We tested various settings of our algorithm on 145 literature-based and randomly generated instances with up to 20 facilities. With our best performing setting, using all enhancements, we obtained the nondominated sets for 96\% of the instances, whereas our basic setting obtained the nondominated sets for only 48\% of the instances. Furthermore, we compare the performance of our algorithm with ILP-based approaches, which obtained the nondominated sets for at most 69\% of the instances. Our best performing setting was up to 180 times faster in terms of runtime than the basic setting and up to 90 times faster than the best ILP-based approach.

The paper is structured as follows: In the remainder of this section, we discuss previous and related work. In Section~\ref{sec:background}, we introduce the basics of multi-objective integer programming, formally define the~\MOSRFLP, and present several mathematical formulations for both the single-objective and the multi-objective~\SRFLP.
In Section~\ref{sec:algorithm}, we first present our $\varepsilon$-constraint method and then our SDP-based \BB algorithm together with various branching strategies, including non-binary branching, and a method to reduce the dimension of the SDPs that need to be solved. In Section~\ref{sec:MO-Enhance}, we introduce multi-objective enhancements for our solution algorithm, i.e., a technique to reuse the \BB search tree from a previous iteration and a solution pooling approach to obtain better feasible solutions. In Section~\ref{sec:comp-results}, our computational study is detailed. Finally, Section~\ref{sec:conclusion} concludes the paper.

\subsection{Literature review}\label{sec:literature}
The \SRFLP was introduced by \citet{SIMMONS1969} as the one-dimensional space allocation problem and is a generalization of the $\mathcal{NP}$-hard minimum linear arrangement problem, hence it is $\mathcal{NP}$-hard as well (see, e.g., \citet{Garey1976} for further details). In \citet{SIMMONS1969}, a combinatorial \BB algorithm for solving the \SRFLP was presented, which allowed the author to obtain an optimal solution for instances with up to eleven facilities. A first ILP-based solution approach was proposed by \citet{LOVE1976}. However, the linear programming (LP) relaxation of this formulation is weak, and the authors managed to solve only one instance with five facilities to optimality. In \citet{Picard1981}, a dynamic programming approach was developed and instances with up to 14 facilities could be solved.

In the 2000s, a new direction for tackling the \SRFLP emerged, namely solution algorithms based on SDP relaxations.
In \citet{anjos_semidefinite_2005} and \citet{anjos_computing_2008}, SDP-based approaches were considered in order to obtain lower bounds on the optimal objective function value, and a heuristic that extracts feasible solutions from the optimal solutions of SDP relaxations was presented. In particular,~\citet{anjos_computing_2008} improved the initial relaxation used in \citet{anjos_semidefinite_2005} by adding so-called triangle inequalities. With this approach, instances with up to 30 facilities could be solved.
\citet{hungerlander_semidefinite_2013} introduced several different sets of valid inequalities and solved the respective SDP relaxations with a tailored bundle method, allowing them to solve strengthened SDP relaxations containing a large set of valid inequalities within reasonable time. They were able to solve instances with up to 42 facilities to optimality. 

Later \citet{Schwiddessen2020, Schwiddessen2022} proposed to use so-called $k$-clique inequalities for various values of~$k$, where for $k=3$ one obtains the triangle inequalities. The SDP relaxations were solved using a penalty approach, and instances with up to 81 facilities could be solved to optimality. We note that aside from just considering SDP relaxations, in the Master thesis \citet{Schwiddessen2020} also a \BB based on the considered SDP relaxations is described, however, no computational results are provided.

Finally, aside from these SDP-based approaches, at the same time, new ILP-based approaches emerged, with the one of \citet{amaral_new_2009}, which uses so-called betweenness variables, being the computationally most successful ILP-based approach. Instances with up to 35 facilities could be solved to optimality with this approach. 

In addition to exact methods, there exists exhaustive work on heuristics for the \SRFLP. An early metaheuristic used for the \SRFLP was the simulated annealing approach by \citet{Heragu1992}. To the best of our knowledge, the current state-of-the-art heuristic approaches for the \SRFLP are a memetic algorithm with simulated annealing based local search procedure by \citet{Tang2023}, an oscillation based simulated annealing approach by \citet{CHEN2025} and a window approach metaheuristic by \citet{PAMMER2026}.

There also exists some previous work on multi-objective extensions to the \SRFLP. However, these works did not consider the \MOSRFLP as we define it in this work. For example, \citet{Lenin2018} (and earlier references therein) considered a multi-objective multi-product variant of an \SRFLP-like problem. The problem consists of determining a
common linear machine sequence for multiple products that require different operation sequences and machine types, with a
limited number of duplicate machine types.
The goal is to minimize the total cost of material flow, the total number of machines used and the total investment cost. The authors used a genetic algorithm to obtain heuristic solutions for this problem and compared these solutions using an average fitness score.
The performance of the authors' algorithm was presented in a computational study on instances from the literature and on randomly generated instances with at most 15 facility types.

Another multi-objective variant of the \SRFLP was considered in \citet{Azadeh2011}. The authors simulated an injection molding process using a fuzzy simulation-fuzzy data envelopment analysis algorithm, where an injection head filled up different molds, which are aligned in a single row. The goal was to identify the layout that optimized multiple performance measures like average waiting time, average machine utilization, and average time in the system.

In \citet{Ouhoud2025}, the authors developed a mixed integer linear programming (MILP) formulation for a variant of the \SRFLP that allows facilities to be rotated. They considered two different objective functions, namely minimizing the total flow distance and maximizing a total closeness ratio. In a small computational study, the authors solved the MILP formulation considering each objective function separately for an instance with five facilities. The authors then also solved the MILP formulation using a normalized weighted-sum objective function across three different sets of weights to analyze the trade-off between the objective functions.

\section{Theoretical background and problem formulations}\label{sec:background} 
In this section, we first provide a brief overview of multi-objective integer programming. Then we give a formal definition of the \MOSRFLP, followed by several formulations for the \SRFLP. Based on these formulations, we conclude this section with a formulation for the \MOSRFLP.

\subsection{Multi-objective integer programs}\label{sec:MOIP}
A general \emph{multi-objective integer program (MOIP)} can be defined as 
\begin{equation*}    
    \begin{aligned}
        \min \quad & f(x)\\
        \text{s.t.} \quad  &x \in \mathcal{X} \cap \mathbb Z^n,
    \end{aligned}
\end{equation*}
where $\mathcal{X}$ $\subseteq \mathbb{R}^n$ for some $n \in \mathbb N$, the set $\mathcal{X} \cap \mathbb Z^n$ denotes the feasible region and
$f: \mathcal{X} \cap \mathbb Z^n \to \mathbb{R}^p$ is a vector of~$p$ objective functions for some
$p \in \mathbb{N}$, $p\geq 1$. For $p=1$ we obtain an ordinary single-objective program, and for $p\geq 2$ we have multiple objectives. In particular, for $p=2$, a \emph{bi-objective integer program (BOIP)} is obtained. We assume that $\mathcal{X}\cap \mathbb Z^n$ is finite and that all objective functions take only rational values.  
The space of the $x$-variables, i.e., $\mathbb R^n$, is called the decision space and we usually call its elements solutions, and the space of $f(x)$, i.e., $\mathbb R^p$, is called the objective space or criterion space and we typically call its elements points. 
We refer the interested reader to the textbook by \citet{Ehrgott2005} for more details on MOIPs.

Since there are multiple objective functions, there is usually no single solution that is optimal with respect to all objective functions, and different concepts of optimality have been defined. In this work, we use the
concept of Pareto optimality, which is one of the most widely-used concepts.
A feasible solution $x \in \mathcal{X} \cap \mathbb Z^n$ is called efficient (or Pareto optimal) if there 
exists no other solution $x ' \in \mathcal{X} \cap \mathbb Z^n$ such that $f_q(x ') \leq f_q(x )$ 
for all $q \in \{1, \dots, p\}$ and for at least one $q \in \{1, \dots, p\}$ the inequality is strict.
If $x$ is efficient, $f(x)$ is called a nondominated point. A point that is not nondominated is called dominated. The set of all efficient
solutions is denoted by $\mathcal{X}_E$ and the set of all nondominated points is called nondominated set (or Pareto front) and is denoted by~$\mathcal{Y}_N$. 
Note that in the case of an MOIP, where the feasible region is a finite and discrete set, the nondominated set is also a finite, discrete set. Moreover, there can be multiple efficient solutions which give the same point in the nondominated set, i.e., there can exist $x, x' \in \mathcal{X} \cap \mathbb Z^n$, $x \neq x'$ with $f(x)=f(x')$. Thus, the usual goal in multi-objective optimization (considering Pareto optimality) is to obtain the nondominated set, i.e., to obtain one efficient solution for each point in the nondominated set.

There are various methods to solve MOIPs, which can be categorized into two types of approaches, namely decision space searches and criterion space searches. Decision space search algorithms explore the decision space by fixing variables and obtaining bound sets on the nondominated set $\mathcal{Y}_N$, similar to the single-objective \BB, see, e.g., \citet{przybylski_multi-objective_2017}. On the other hand, criterion space search algorithms systematically explore the criterion space by iteratively solving single-objective optimization problems like the $\varepsilon$-constraint method by \citet{Haimes1971}, which we explain in more detail in Section~\ref{sec:epsMethod}, or the two-phase method by \citet{ULUNGU1995}. 

\subsection{The multi-objective single-row facility location problem}\label{sec:mo-srflp}
Using the concepts defined above, we are now ready to give a formal definition of the main problem we consider in this paper, the \MOSRFLP.

\begin{definition}
\label{def:MO-SRFLP}
Let $p \in \mathbb{N}$, $p\geq 1$ and let $I = \{1,  \dots, n\}$ be a set of $n$ one-dimensional facilities with positive lengths 
$\ell_i \in \mathbb{R}_{>0}$ for each $i \in I$ and non-negative symmetric weights $c_{ij}^q \in \mathbb{R}_{\geq0}$ for each
pair of facilities $i,j \in I$ for each $q \in \{1, \dots, p\}$.
Let $\Pi_n$ be the set of all permutations $\pi$ of $\{1, \dots, n\}$ 
and let $D_{\pi}(i,j)$ be the sum of the lengths of the facilities between 
the facilities $i$ and $j$ in the permutation $\pi \in \Pi_n$ of the $n$ facilities of $I$, so 
$$\frac{\ell_i}{2} + D_{\pi}(i,j) + \frac{\ell_j}{2}$$
is the center-to-center distance between facilities $i$ and $j$ in $\pi$.

Then the \emph{multi-objective single-row facility layout problem (\MOSRFLP)} is defined as
\begin{equation*}
    \min_{\pi \in \Pi_n} f(\pi),  
\end{equation*}
where for each $q \in \{1, \dots, p\}$ the $q$-th objective function $f_q(\pi)$ of $f(\pi)$ is defined as 
\begin{equation*}
    f_q(\pi) =  \sum_{\substack{i,j \in I \\ i < j}} c_{ij}^q 
    \left( \frac{\ell_i}{2} + D_{\pi}(i,j) + \frac{\ell_j}{2} \right). 
\end{equation*}
The goal of the \MOSRFLP is to determine the nondominated set, i.e., to find one efficient solution for each point in the nondominated set.
 
\end{definition} 

As already mentioned in the introduction, for $p=1$ the \MOSRFLP is the classical \SRFLP.

\subsection{Mathematical formulations}\label{sec:mathForm}
In this section, we provide several formulations for the single-objective \SRFLP that we use to derive a formulation for the \MOSRFLP.
\subsubsection{Formulations for the single-objective \SRFLP}\label{sec:SRFLPForm}
\citet{anjos_semidefinite_2005} presented a formulation for
the \SRFLP using ordering variables $x = (x_{ij})_{i,j \in I, i < j}$, $x_{ij} \in \{-1, 1\}$ to represent permutations. In particular, for each pair of facilities $i,j \in I$ with $i<j$ they use the variable $x_{ij}$, where $x_{ij}=1$ holds if and only if the facility $i$ is positioned to the left of facility~$j$ in the permutation and $-1$ otherwise. 
Moreover, \citet{anjos_semidefinite_2005} define
\begin{equation}    
    K = \left(\sum_{\substack{i,j \in I \\ i < j}} \frac{c_{ij}}{2}\right)
    \left(\sum_{k\in I}\ell_k\right) \label{eq:const}
\end{equation}
and with that present the quadratic program
\begin{subequations}
\label{QP:SRFLP}
\begin{alignat}{3}
    \min \quad & K - \sum_{\substack{i,j \in I \\ i < j}} \frac{c_{ij}}{2}
    \left[\sum_{\substack{k \in I \\ k < i}}\ell_kx_{ki}x_{kj} - 
    \sum_{\substack{k \in I \\ i < k < j}}\ell_kx_{ik}x_{kj} \right.&&\left.+ 
    \sum_{\substack{k \in I \\ k > j}}\ell_kx_{ik}x_{jk}\right] \label{eq:SOQ-obj} \\
    \text{s.t.}\quad & x_{ij}x_{jk} - x_{ij}x_{ik} - x_{ik}x_{jk} = -1 
    && \forall i,j,k \in I, i < j < k \label{eq:SOQ-cycle}\\
    &x_{ij} \in \{-1, 1\} && \forall i,j \in I, i < j. 
    \label{eq:SOQ-binary}
\end{alignat}
\end{subequations}
as formulation for the \SRFLP. They proved that the so-called 
3-cycle constraints~\eqref{eq:SOQ-cycle} together with~\eqref{eq:SOQ-binary} suffice to exactly represent all possible permutations, because they imply $(x_{ij}+x_{jk})(x_{ij}-x_{ik}) = 0$ for all $i,j,k \in I$ with $i < j < k$ and thus model transitivity (if $j$ is between $i$ and $k$, so the first bracket is not zero, then the second bracket needs to be zero, so $i$ is either to the left or to the right of both $j$ and $k$).
They also showed that the objective function~\eqref{eq:SOQ-obj} coincides with the objective function of the \SRFLP. 

Based on the quadratic program~\eqref{QP:SRFLP}, \citet{anjos_semidefinite_2005} derived a formulation for the \SRFLP in the variable matrix $X \in \mathbb R^{\binom{n}{2}\times\binom{n}{2}}$ which allowed them to obtain a straightforward SDP relaxation. However, in our work, we use an extended SDP-based formulation presented by \citet{hungerlander_semidefinite_2013}, which uses both the variables 
$x \in \mathbb R^{\binom{n}{2}}$ and $X \in \mathbb R^{\binom{n}{2}\times\binom{n}{2}}$.
Towards this end, let
$\widetilde C = (\widetilde C_{ij,kh})_{i<j,k<h \in I} \in \mathbb{R}^{\dimension \times \dimension}$ with
\begin{align}
    \widetilde C_{ij,kh} &= 
    \begin{cases}
        -\dfrac{c_{j\fourindex}}{2}\ell_i & \text{if } i < j < \fourindex, k=i \\[4pt]
        \dfrac{c_{i\fourindex}}{2}\ell_j & \text{if } i<j<\fourindex, k=j \\[4pt]
        -\dfrac{c_{ik}}{2}\ell_j & \text{if } i<k<j, \fourindex=j \\[4pt]
        0 & \text{otherwise}, 
    \end{cases}\label{eq:costMatrix}
\end{align}
and for obtaining a symmetric matrix let $C = \dfrac{1}{2}{(\widetilde C + \widetilde C^T)}$,
then \citet{hungerlander_semidefinite_2013} formulated the \SRFLP as
\begin{subequations}
\label{SRFLP}
\begin{align}
    \quad \min \quad & K + \langle C, X\rangle \label{eq:SRFLP-Obj}\\
    \text{s.t.} \quad & X_{ij,jk} - X_{ij,ik} - X_{ik,jk} = -1 &\qquad& \forall i, j
    , k \in I, i < j < k \label{eq:3Cycle}\\
    & \diag(X) = e \label{eq:SRFLP-diag}\\
    & \begin{pmatrix}
    1 & x^T \\
    x & X
\end{pmatrix}\succeq 0 \label{eq:SRFLP-psd}\\
    & x_{ij} \in \{-1,1\} && \forall i, j \in I, i < j. \label{eq:x_integer}
\end{align}
\end{subequations}
This is not the only formulation for the \SRFLP using these variables, as the next result shows.

\begin{observation}\label{obs:rank-x_integer}
    Also~\eqref{SRFLP} when replacing~\eqref{eq:x_integer} with the constraint
    \begin{align}
    & \rank\begin{pmatrix}
    1 & x^T \\
    x & X
\end{pmatrix} = 1 \label{eq:SRFLP-rank}
    \end{align}
    is a formulation for the \SRFLP.
\end{observation}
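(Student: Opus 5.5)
We show that the feasible sets coincide: every point feasible for \eqref{SRFLP} is feasible for the rank-constrained problem, and conversely. Since the objective $K+\langle C,X\rangle$ is unchanged, the two problems are then the same. That \eqref{SRFLP} itself is a formulation of the \SRFLP is taken from \citet{hungerlander_semidefinite_2013}, i.e., the previous result.

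\emph{The rank-one problem is contained in \eqref{SRFLP}.} Suppose the block matrix $Y=\begin{pmatrix}1 & x^T\\ x & X\end{pmatrix}$ is positive semidefinite and has rank one.
\begin{itemize}
\item Since $Y\succeq 0$ has rank one, $Y=vv^T$ for some vector $v$.
\item Comparing the top-left entries gives $v_0^2=1$. Replacing $v$ by $-v$ if necessary, we may assume $v_0=1$.
\item Then the first row and column give $v=(1,x)$, and hence $X=xx^T$.
\item The diagonal constraint \eqref{eq:SRFLP-diag} now reads $x_{ij}^2=X_{ij,ij}=1$, so $x_{ij}\in\{-1,1\}$. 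This is exactly \eqref{eq:x_integer}.
\end{itemize}
The constraints \eqref{eq:3Cycle} and \eqref{eq:SRFLP-psd} are shared by both problems, so the point is feasible for \eqref{SRFLP}.

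\emph{\eqref{SRFLP} is contained in the rank-one problem.} This is the main obstacle, because feasibility for \eqref{SRFLP} does not immediately force $X=xx^T$. Suppose $x\in\{-1,1\}^{\binom n2}$ and $Y\succeq 0$. By a Schur complement argument, $Y\succeq0$ is equivalent to $X-xx^T\succeq 0$. By \eqref{eq:SRFLP-diag} the diagonal of $X$ is all ones, and since $x_{ij}^2=1$ the diagonal of $xx^T$ is also all ones. Hence $X-xx^T$ is positive semidefinite with zero diagonal, so its trace is zero. A positive semidefinite matrix with trace zero is the zero matrix, because its eigenvalues are nonnegative and sum to zero. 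Therefore $X=xx^T$.

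It follows that $Y=(1,x)(1,x)^T$, which has rank one, so \eqref{eq:SRFLP-rank} holds. The remaining constraints \eqref{eq:3Cycle} and \eqref{eq:SRFLP-psd} are common to both problems, so the point is feasible for the rank-constrained problem.

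Together the two inclusions show that the feasible sets are equal. With the identical objective, both problems have the same optimal solutions and optimal values, which proves the observation. As a consistency check, on either feasible set \eqref{eq:3Cycle} reduces to the 3-cycle constraints \eqref{eq:SOQ-cycle} of \eqref{QP:SRFLP} on $\pm1$ vectors, and these encode exactly the permutations.
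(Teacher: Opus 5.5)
Your proof is correct and follows essentially the same route as the paper: the rank-one constraint gives $X = xx^T$, which together with $\diag(X)=e$ forces $x_{ij}\in\{-1,1\}$; conversely, the Schur complement gives $X - xx^T \succeq 0$ with zero diagonal, hence $X = xx^T$ and the block matrix has rank one. You simply spell out details the paper leaves implicit, namely the factorization $Y = vv^T$ with $v_0 = 1$ and the zero-trace argument.
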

\begin{proof} 
    We only need to show that~\eqref{eq:x_integer} is equivalent to~\eqref{eq:SRFLP-rank} with the other constraints of~\eqref{SRFLP}.
    Clearly~\eqref{eq:SRFLP-rank} implies $X = xx^T$, and together with~\eqref{eq:SRFLP-diag} this implies~\eqref{eq:x_integer}. To see the other direction, note that~\eqref{eq:x_integer} implies $x_{ij}^2 = 1$, so it holds that $\diag(X - xx^T)=0$. Furthermore, by Schur's complement~\eqref{eq:SRFLP-psd} yields $X-xx^T \succeq 0$, so $X = xx^T$ and therefore~\eqref{eq:SRFLP-rank} holds.
\end{proof}

We now make several remarks.
First, we want to mention that the objective function~\eqref{eq:SRFLP-Obj} and the constraint~\eqref{eq:3Cycle} correspond to the objective function~\eqref{eq:SOQ-obj} and the constraint~\eqref{eq:SOQ-cycle} in matrix notation, respectively.

Additionally, we want to point out that the mentioned formulation for \citet{anjos_semidefinite_2005} in only $X$ variables coincides with~\eqref{SRFLP} when replacing~\eqref{eq:x_integer} with~\eqref{eq:SRFLP-rank}, where~\eqref{eq:SRFLP-rank} and~\eqref{eq:SRFLP-psd} are adapted to $\rank(X) = 1$ and $X\succeq0$, respectively. This formulation is also mentioned in \citet{anjos_provably_2009}. 
However, the presence of the $x$-variables in~\eqref{SRFLP} allows us to use easier branching schemes, see Section~\ref{sec:binarybranch} for details. 

Furthermore, we observe that we obtain an SDP relaxation of the \SRFLP by dropping constraint~\eqref{eq:x_integer} from the above formulation~\eqref{SRFLP}. 
Also here, having the formulation with $x$-variables is beneficial. 
Indeed, it was already observed by \citet{hungerlander_computational_2013} 
that in general~\eqref{eq:SRFLP-psd} is a stronger constraint than $X\succeq0$, so when considering SDP relaxations, using~\eqref{eq:SRFLP-psd} gives the same or better bounds.

Finally, in \citet{anjos_provably_2009} the equations~\eqref{eq:3Cycle} have been aggregated by the index $k$ to obtain the constraints
\begin{align}
    \sum_{\substack{k \in I, \\ i \neq k \neq j}} X_{ij,jk} - X_{ij,ik} - X_{ik,jk} = -(n-2) && 
    \forall i, j \in I, i < j. \label{eq:agg3cycle}
\end{align}
Note that the constraints~\eqref{eq:agg3cycle} may include variables with indices that are not defined, e.g., $X_{23,31}$ when $i=2$, $j=3$ and $k=1$. 
However, the original ordering variables $x = (x_{ij})_{i,j \in I, i < j} \in \mathbb R^{\binom{n}{2}}$ can be extended in a canonical way to extended ordering variables $\bar{x} = (\bar{x}_{ij})_{i,j \in I} \in \mathbb R^{n(n-1)}$, where again $\bar{x}_{ij}=1$ holds if and only if facility $i$ is positioned to the left of facility~$j$ in the permutation and $-1$ otherwise. 
Then clearly $x_{ij} = \bar{x}_{ij} = -\bar{x}_{ji}$ holds for all $i,j \in I$ with $i<j$. This together with the intuition of $X = xx^T$ and the help of the function
\begin{align*}
    \tau(i,j) &= 
    \begin{cases}
        1 & \text{if } i < j \\
        -1 & \text{otherwise} 
    \end{cases}
\end{align*}
then implies that when using
\begin{align}\label{eq:transformationileqj}
    X_{ij,kh} = \tau(i,j) \tau(k,h) X_{\min\{i,j\}\max\{i,j\}, \min\{k,h\}\max\{k,h\}} && 
    \forall i, j,k,h \in I 
\end{align}
everything fits together and~\eqref{eq:agg3cycle} can be represented using only existing variables.

The constraints~\eqref{eq:agg3cycle} can also be used in formulation~\eqref{SRFLP} instead of~\eqref{eq:3Cycle}, as the next result shows.

\begin{observation}
   Also~\eqref{SRFLP} when replacing~\eqref{eq:3Cycle} with the constraints~\eqref{eq:agg3cycle} is a formulation for the \SRFLP.
\end{observation}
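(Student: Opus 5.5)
We need to show that the feasible sets coincide. More precisely, replacing \eqref{eq:3Cycle} by \eqref{eq:agg3cycle} in \eqref{SRFLP} should leave exactly the pairs $(x,X)$ with $X=xx^T$ and $x$ encoding a permutation. By Observation~\ref{obs:rank-x_integer} and its proof, \eqref{eq:SRFLP-diag}, \eqref{eq:SRFLP-psd} and \eqref{eq:x_integer} together force $X = xx^T$ with $x \in \{-1,1\}^{\binom{n}{2}}$. The whole argument therefore reduces to the vector $x$. We must show that, for $X=xx^T$, the family \eqref{eq:3Cycle} holds if and only if the family \eqref{eq:agg3cycle} holds. The objective is unchanged, so this suffices.

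\textbf{Step 1: rewrite each summand as a 3-cycle term.} I would use \eqref{eq:transformationileqj} and the extended variables $\bar x$. For $X=xx^T$ each summand in \eqref{eq:agg3cycle} becomes
\[
g(i,j,k) := \bar x_{ij}\bar x_{jk} - \bar x_{ij}\bar x_{ik} - \bar x_{ik}\bar x_{jk},
\]
with $\bar x_{ji}=-\bar x_{ij}$. I would then check that $g$ is invariant under permuting the three indices. For a transposition, say swapping $i$ and $j$, a sign bookkeeping with $\bar x_{ji}=-\bar x_{ij}$ maps the three products onto each other. Hence for every triple $\{i,j,k\}$ of distinct indices, $g(i,j,k)$ equals the left-hand side of \eqref{eq:3Cycle} for the sorted triple. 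This bookkeeping is where I expect the main effort; if symmetry fails literally, I would instead verify the relabeling directly in the three cases $k<i$, $i<k<j$ and $k>j$.

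\textbf{Step 2: show each term is at most $-1$, with equality exactly when transitive.} For $\pm1$ entries, write $a=\bar x_{ij}$, $b=\bar x_{jk}$, $c=\bar x_{ik}$. Then $g = ab - ac - bc$, and a direct check of the $8$ sign patterns gives $g=-1$ in $6$ cases and $g=3$ in $2$ cases. The two cases with $g=3$ are the cyclic ones $a=b=-c$, i.e.\ $i$ before $j$, $j$ before $k$ and $k$ before $i$. So $g(i,j,k)\ge -1$, with equality if and only if the triple is transitively ordered.

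\textbf{Step 3: both directions.} If \eqref{eq:3Cycle} holds, then every term equals $-1$ by Step 1. Each sum in \eqref{eq:agg3cycle} has $n-2$ terms, so it equals $-(n-2)$. Conversely, suppose \eqref{eq:agg3cycle} holds. For a fixed pair $i<j$, the sum consists of $n-2$ terms, each $\ge -1$, summing to $-(n-2)$. Hence every term equals $-1$. Every triple $i<j<k$ occurs as a term, for instance in the constraint for the pair $(i,j)$. So \eqref{eq:3Cycle} holds for all triples.

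As noted, the only delicate point is the index and sign bookkeeping in Step 1. Steps 2 and 3 are a finite case check and a sum-of-bounded-terms argument.
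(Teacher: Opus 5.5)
Your proof is correct and takes essentially the paper's route. The paper uses the argument of Observation~\ref{obs:rank-x_integer} to reduce to $X=xx^T$ with $x\in\{-1,1\}^{\binom{n}{2}}$, and then defers to the proof of Theorem~3.1 in \citet{anjos_provably_2009}; that proof is the same termwise bound you give (each summand is $-1$ or $3$, so the aggregate equals $-(n-2)$ only if every summand equals $-1$), while your Step~1 spells out the index and sign bookkeeping via \eqref{eq:transformationileqj} that the paper leaves implicit.
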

\begin{proof}
    We only need to show that~\eqref{eq:3Cycle} is equivalent to~\eqref{eq:agg3cycle} with the other constraints of~\eqref{SRFLP}.
    This can be done with analogous arguments as the ones in the proof of Theorem~3.1 in \citet{anjos_provably_2009}, when additionally using the arguments of Observation~\ref{obs:rank-x_integer} that show that $X \in \{-1,1\}^{\binom{n}{2} \times \binom{n}{2}}$ and the fact that~\eqref{eq:SRFLP-psd} implies that $X \succeq 0$.
\end{proof}

As a result, replacing~\eqref{eq:3Cycle} with~\eqref{eq:agg3cycle} when considering~\eqref{SRFLP} reduces the number of constraints to $\mathcal{O}(n^2)$. Unfortunately, this may lead to a weaker SDP relaxation. Preliminary computations showed that this reduction is still preferable for our algorithm. Thus, we 
use~\eqref{eq:agg3cycle} instead of~\eqref{eq:3Cycle} in the following.

\subsubsection{Formulation for the multi-objective \SRFLP}\label{sec:MO-SRFLPForm}

Next, we use the single-objective \SRFLP formulations described in the previous section to present a formulation for the \MOSRFLP. Given an instance of the \MOSRFLP, i.e., lengths $\ell_i \in \mathbb{R}_{>0}$ for each $i \in I$ and weights $c_{ij}^q \in \mathbb{R}_{\geq0}$ between all pairs of facilities $i,j \in I$ for each $q \in \{1, \dots,p\}$, let $C_q \in \mathbb{R}_{\geq0}^{\dimension \times \dimension}$ 
be the cost matrix $C$ constructed as described in~\eqref{eq:costMatrix} and let $K_q$ 
be the $K$ as defined in~\eqref{eq:const} for the weights $c_{ij} = c_{ij}^q$ for each $q \in \{1, \dots,p\}$.
We can then 
formulate the \MOSRFLP as 
\begin{subequations}
\label{eq:MO-SRFLP}
    \begin{align}
        \min        \quad   & (K_1+\langle C_1, X\rangle,\dots,K_p+\langle C_p, X\rangle) \\
        \text{s.t.} \quad  & \sum_{\substack{k \in I, \\ i \neq k \neq j}} X_{ij,jk} - X_{ij,ik} - X_{ik,jk} = -(n-2) &\qquad& \forall i, j \in I, i < j \label{eq:MO-SRFLP-3Cycle}\\
                            & \diag(X) = e \label{eq:MO-SRFLP-diag}\\
                            & \begin{pmatrix}
                                1 & x^T \\
                                x & X
                            \end{pmatrix}\succeq 0 \label{eq:MO-SRFLP-psd}\\
                             & x_{ij} \in \{-1,1\} && \forall i, j \in I, i < j. \label{eq:MO-SRFLP-xint} 
    \end{align} 
\end{subequations}
In the remainder of the paper we consider the \emph{bi-objective single-row facility location problem (\BOSRFLP)}, i.e.,  the \MOSRFLP~\eqref{eq:MO-SRFLP} for $p=2$, 
and we develop a solution algorithm to tackle the \BOSRFLP that utilizes the $\varepsilon$-constraint method and an SDP-based branch-and-bound algorithm.

\section{Solution algorithm}\label{sec:algorithm}
In this section, we present our solution algorithm to obtain the nondominated set of the \BOSRFLP. This solution algorithm is based on the $\varepsilon$-constraint method, which we detail in Section~\ref{sec:epsMethod}. We solve the single-objective problems that emerge within the $\varepsilon$-constraint method with a custom SDP-based \BB algorithm presented in Section~\ref{sec:SO-BB}. Afterwards, we discuss binary and non-binary branching strategies for the \BB in Section~\ref{sec:binarybranch} and Section~\ref{sec:nonBin}, respectively. Finally, we present a technique to reduce the dimension of the SDP relaxations that are solved within our \BB in Section~\ref{sec:reduction}. 

\subsection{The \texorpdfstring{$\varepsilon$-constraint method}{epsilon-constraint method}}\label{sec:epsMethod}
The $\varepsilon$-constraint method was introduced in \citet{Haimes1971} and is a popular approach for computing the nondominated set of MOIPs. The idea of this method is to solve a series of single-objective problems, the so-called $\varepsilon$-constraint problems. In these problems, one of the objective functions is kept as objective function, and the other ones are limited by a vector~$\varepsilon$ with additional constraints.

We note that even though the $\varepsilon$-constraint method works for all $p\geq2$, we restrict ourselves to the case of $p=2$, as the computational complexity of the $\varepsilon$-constraint method scales exponentially with the number of objective functions $p$, see, e.g. \citet{LAUMANNS2006} for details.
In particular, for BOIPs and a given $\varepsilon \in \mathbb{R}$, we obtain the $\varepsilon$-constraint problem 
\begin{align*} \label{eq:epsProblem}
   \mytag{P(\varepsilon)} \quad  \min_{x \in \mathcal{X} \cap \mathbb Z^n} \quad &f_1 (x) \\
    \text{s.t.} \quad &f_2(x) \leq \varepsilon. 
\end{align*} 
Note, that one could also use $f_2$ as objective function and include $f_1$ into the constraints. 
The $\varepsilon$-constraint method solves these problems in an iterative fashion for different values of $\varepsilon$ in order to compute the nondominated set of a BOIP. The method is outlined in Algorithm~\ref{alg:epsMethod}.

As an input, it takes the two objective functions $f_1$ and $f_2$, the set $\mathcal{X}$ and a parameter $\delta$, for which 
\begin{equation} 
\label{eq:delta}
0 < \delta \leq \inf_{\substack{x,x' \in \mathcal{X} \cap \mathbb Z^n:\\ f_2(x) \neq f_2(x')}} |f_2(x) - f_2(x')|
\end{equation}
needs to hold, so $\delta$ is a lower bound on the difference of the second objective function values for any two feasible solutions, for which these values do not coincide. Note that such a value of $\delta$ can always be obtained because $\mathcal{X} \cap \mathbb Z^n$ is finite, hence the right-hand side of~\eqref{eq:delta} is positive or $\infty$ if all feasible solutions in $\mathcal{X} \cap \mathbb Z^n$ have the same second objective function value.
It is easy to see that one efficient solution $x^*$ associated with each point $(f_1(x^*), f_2(x^*))$ in the nondominated set is also obtained, but for ease of presentation we do not explicitly mention storing these solutions in Algorithm~\ref{alg:epsMethod}.

\begin{algorithm}[h!tb]
    \caption{\epsConstraint method for BOIPs}\label{alg:epsMethod}
\begin{algorithmic}[1]
    \Require $f_1$, $f_2$, $\mathcal{X}$, $\delta$
    \State $\mathcal{Y}_{N'} \gets \emptyset$, $\varepsilon \gets \infty$
    \State $(feas, x^*)$ $\gets $ \texttt{solve}$\myrefmath{P(\varepsilon)}$
    \While{$feas$}
        \State $\mathcal{Y}_{N'} \gets \mathcal{Y}_{N'} \cup \{(f_1(x^*), f_2(x^*))\}$
        \State $\varepsilon \gets f_2(x^*) - \delta$
        \State $(feas, x^*)$ $\gets $ \texttt{solve}$\myrefmath{P(\varepsilon)}$
    \EndWhile
    \State $\mathcal{Y}_N \gets$ \texttt{filterDominated}$(\mathcal Y_{N'})$ \label{alg:weakline}
    \State \Return $\mathcal{Y}_N$
\end{algorithmic}
\end{algorithm}

In this algorithm, \texttt{solve}$\myrefmath{P(\varepsilon)}$ denotes a black-box algorithm capable of solving $\myrefmath{P(\varepsilon)}$ which returns a tuple $(feas,x^*)$, where $feas$ is a flag which takes the value $true$ if $\myrefmath{P(\varepsilon)}$ is feasible, and in this case~$x^*$ contains an optimal solution to $\myrefmath{P(\varepsilon)}$. If $\myrefmath{P(\varepsilon)}$ is infeasible, the flag $feas$ contains the value $false$ and $x^*$ is not defined.

Moreover, the function \texttt{filterDominated}$(\mathcal Y_{N'})$ gets called at the end as a post-processing step. The reason for this is that otherwise 
the $\varepsilon$-constraint method does not produce only nondominated points. 
Thus, the function \texttt{filterDominated}$(\mathcal Y_{N'})$ partitions $\mathcal Y_{N'}$ into sets of points with the same value in the first objective function. For each such set, it removes all points except the one with the smallest value in the second objective function. 

We note that there are other methods to handle this issue, e.g., the lexicographic $\varepsilon$-constraint method described by \citet[Chapter 6]{Cohen1978} or a hybridization of the weighted sum and the $\varepsilon$-constraint method introduced by \citet{Neumayer1994}. While these methods are guaranteed to find only nondominated points, there are drawbacks as well, e.g., for the lexicographic $\varepsilon$-constraint method, at each iteration two problems have to be solved, and in the hybrid approach, weights have to be carefully determined.
As a result, we have decided to perform the described post-processing step.

The $\varepsilon$-constraint problem of the \BOSRFLP, i.e., the problem $\myrefmath{P(\varepsilon)}$ to be solved within one iteration of the $\varepsilon$-constraint method, is given by
\begin{subequations}
    \begin{align}
        \mytag{BOSRFLP(\varepsilon)} \quad \min \quad &K_1 + \langle C_1,X \rangle \label{eq:BO-obj}\\
        \text{s.t.} \quad & K_2 + \langle C_2, X \rangle \leq \varepsilon \label{eq:BO-eps} \\ 
        & \eqref{eq:MO-SRFLP-3Cycle}-\eqref{eq:MO-SRFLP-xint},
        \label{eq:BO-rest}
    \end{align}
\end{subequations}
and we solve the problem $\myrefmath{BOSRFLP(\varepsilon)}$ for fixed values of $\varepsilon$ using an SDP-based \BB algorithm. 

Before we give the details of how we do that, we quickly mention an alternative for $\myrefmath{P(\varepsilon)}$, which we will later on use in our computational study in Section~\ref{sec:comp-results} to evaluate the quality of our solution approach. In order to compare it with off-the-shelf solvers, we consider the \SRFLP ILP-formulation by \citet{amaral_new_2009} using betweenness variables~$b = (b_{ijk})_{\substack{{i,j,k \in I, i < j} \\i \neq k \neq j}} \in \mathbb R^{(n-2)\binom{n}{2}}$ to represent permutations. In particular, for every triplet $i,j,k \in I$ of facilities with $i < j$ and $i \neq k \neq j$, this formulation uses the variable $b_{ijk} \in \{0,1\}$, where $b_{ijk}=1$ if and only if facility $k$ is positioned between the facilities~$i$ and~$j$ in the permutation and $b_{ijk}=0$ otherwise. Using this notation and the formulation by~\citet{amaral_new_2009}, we obtain
\begin{subequations} \label{eq:BOSRFLPe-IP}
    \begin{align}
        \min \quad &\sum_{\substack{i,j \in I \\ i<j}} c_{ij}^1 \sum_{k \in I\setminus\{i,j\}} \ell_kb_{ijk} + \sum_{\substack{i,j \in I\\i<j}}c_{ij}^1 \frac{\ell_i + \ell_j}{2} \\
        \text{s.t. }\quad &\sum_{\substack{i,j \in I \\ i<j}} c_{ij}^2 \sum_{k \in I\setminus\{i,j\}} \ell_kb_{ijk} + \sum_{\substack{i,j \in I\\i<j}}c_{ij}^2 \frac{\ell_i + \ell_j}{2} \leq \varepsilon \\
         &b_{ijk}+b_{ikj}+b_{jki} = 1 && \forall i, j, k\in I, i<j<k \\
         &b_{ijh}+b_{jkh}+b_{ikh} \leq 2 && \forall i, j, k, h \in I, i<j<k<h \\
         &-b_{ijh}+b_{jkh}+b_{ikh} \geq 0 && \forall i, j, k, h \in I, |\{i, j, k, h\}|=4, i<j<k \\
         &b_{ijh}-b_{jkh}+b_{ikh} \geq 0 && \forall i, j, k, h \in I, |\{i, j, k, h\}|=4, i<j<k \\
         &b_{ijh}+b_{jkh}-b_{ikh} \geq 0 && \forall i, j, k, h \in I, |\{i, j, k, h\}|=4, i<j<k \\
         &b_{ijk} \in\{0,1\} && \forall i, j, k \in I, |\{i, j, k\}|=3 , i<j
    \end{align}
\end{subequations}
as an ILP formulation for the $\varepsilon$-constraint problem $\myrefmath{P(\varepsilon)}$ of the \BOSRFLP.

\subsection{Single-objective \BB}\label{sec:SO-BB}
We now provide a detailed explanation of our custom SDP-based \BB algorithm to solve $\myrefmath{BOSRFLP(\varepsilon)}$ for a fixed value of $\varepsilon$.
We note that for $\varepsilon=\infty$ this is an algorithm for solving the \SRFLP to optimality.
By dropping the constraints~\eqref{eq:MO-SRFLP-xint} we obtain an SDP relaxation of $\myrefmath{BOSRFLP(\varepsilon)}$. 

If (at least) one of the variables $x_{ij}$ does not take a value in $\{-1,1\}$ in the current optimal solution of the SDP relaxation at a node in the \BB tree, in our standard, i.e., binary branching-scheme (with multiple variants to select the branching variable, see Section~\ref{sec:binarybranch} for details), we pick one of these variables and branch on it to create two new child nodes (one node where the variable is fixed to $-1$, and another node where it is fixed to $1$). 

More formally, let $I^2=\{ ij: i,j \in I, i<j\}$, i.e., $I^2$ is the index-set of the $x$-variables. 
For a given node in the \BB tree, the sets $I^-\subseteq I^2$ and $I^+\subseteq I^2$ indicate which variables are already fixed to $-1$ and $1$ at this node, respectively.
Then the SDP relaxation which gets solved at this node is
\begin{subequations}
\label{eq:SDP_eps_node}
    \begin{align}
        \mytag{SDP(\varepsilon,I^-,I^+)} \quad z = \min \quad &K_1 +\langle C_1,X \rangle \label{eq:BOR-obj}\\
        \text{s.t.} \quad & K_2 + \langle C_2, X \rangle \leq \varepsilon \label{eq:BOR-eps} \\ 
        & \eqref{eq:MO-SRFLP-3Cycle}-\eqref{eq:MO-SRFLP-psd} \\
        &x_{ij}=-1 &\quad&\forall ij \in I^- \label{eq:BOR-down} \\
        &x_{ij}=1 &&\forall ij \in I^+ \label{eq:BOR-rest}.
    \end{align}
\end{subequations}

\begin{algorithm}[tb]
    \caption{SDP-based \BB algorithm to solve $BOSRFLP(\varepsilon)$ for fixed $\varepsilon$}\label{alg:BnB}
\begin{algorithmic}[1]
    \Require $\myrefmath{BOSRFLP(\varepsilon)}$
    \State $(z^{UB},x^{UB},X^{UB}) \gets (\infty,\emptyset,\emptyset)$ \Comment{initialize empty incumbent} 
  \State $\mathcal{Q} \gets \texttt{addNode}( - \infty, \emptyset,\emptyset)$ \Comment{initialize the tree} \label{line:init}
    \While{$\mathcal{Q}\neq \emptyset$}
        \State $(z^*, I^-,I^+) \gets \texttt{top}(\mathcal Q)$ \label{line:pickNode}
        \Comment{consider node in $\mathcal{Q}$ with smallest $z^*$}
        \If{$z^* \geq z^{UB}$} \Comment{prune by bound} \label{line:bound1}
            \State \textbf{break} \label{line:bound3}
            \Comment{prune by bound all remaining nodes in $\mathcal{Q}$}
        \EndIf
        \State $(feas,z^*,x^*,X^*)\gets \texttt{solve}(\myrefmath{SDP(\varepsilon,I^-,I^+)})$ \Comment{solve SDP relaxation to optimality}\label{line:solve}
        \If{$feas=false$} \Comment{prune by infeasibility}
            \State \textbf{continue}
        \EndIf
        \If{$x^*\in\{-1,1\}^{\binom{n}{2}}$} \Comment{prune by integrality} \label{line:integrality}
            \If{$z^*<z^{UB}$} \Comment{update the incumbent}
                \State $(z^{UB},x^{UB},X^{UB}) \gets (z^*,x^*,X^*)$
            \EndIf
            \State \textbf{continue}
        \EndIf
        \State $(\tilde z,\tilde x, \tilde X) \gets \texttt{heuristic}(X^*)$ \Comment{see Section~\ref{sec:heur}}
        \If{$\tilde z < z^{UB}$} \Comment{update the incumbent with the heuristic solution}
            \State $(z^{UB},x^{UB},X^{UB}) \gets (\tilde z,\tilde x, \tilde X)$
        \EndIf
        \If{$z^* \geq z^{UB}$} \Comment{prune by bound} \label{line:bound2}
            \State \textbf{continue}
        \EndIf
            \State $ij \gets \texttt{branchingIndex}(x^*,I^-,I^+)$ \Comment{see Section~\ref{sec:binarybranch}} \label{line:branch1}
            \State $\mathcal Q \gets \texttt{addNode}(z^*,I^- \cup \{ij\},I^+)$ \Comment{node where $x_{ij}$ is fixed to $-1$} \label{line:branch2}
            \State $\mathcal Q \gets \texttt{addNode}(z^*,I^-,I^+\cup \{ij\})$ \Comment{node where $x_{ij}$ is fixed to $1$} \label{line:branch3}
    \EndWhile
    \State \Return $(z^{UB},x^{UB},X^{UB})$
\end{algorithmic}
\end{algorithm}
Algorithm~\ref{alg:BnB} shows how we implemented our \BB algorithm to solve $\myrefmath{BOSRFLP(\varepsilon)}$.
A node in our \BB tree is determined by 
$(z,I^-,I^+)$, i.e., a priority $z$ and the variable fixings $I^-$ and $I^+$. The tree is implemented using a priority queue $\mathcal Q$, where the priorities $z$ of the nodes in $\mathcal Q$ are given by the objective function value of the relaxation of the parent of the node. We select the node with minimal priority to be considered next, i.e., we use a worst bound strategy for node selection in the function \texttt{top}$(\mathcal Q)$, which returns and removes the element with minimal priority from $\mathcal Q$ (ties are broken arbitrarily).
We initialize $\mathcal Q$ with \texttt{addNode}$(-\infty, \emptyset,\emptyset)$, which adds a root-node with priority minus infinity and no variable fixings.

We then check if the objective function value of the relaxation of the parent $z^*$ is smaller than the objective function value of the current incumbent $z^{UB}$, if not, we stop processing the node (pruned by bound). If yes, we then solve the current SDP relaxation with \texttt{solve}$(\myrefmath{SDP(\varepsilon,I^-,I^+)})$. This function returns a tuple with four entries, where the first one is a flag $feas$, which indicates if the current SDP relaxation is feasible or not. 
If it is not feasible, the remaining entries of the tuple are undefined, we stop processing the node (pruned by infeasibility) and continue with the next node. 
If the SDP relaxation is feasible, then the remaining entries of the tuple contain the optimal objective function value $z^*$ and an optimal solution ($x^*,X^*$) of the SDP relaxation. In this case, we first check if ($x^*,X^*$) is also feasible for $\myrefmath{BOSRFLP(\varepsilon)}$, i.e., if $x^* \in \{-1,1\}^{\binom{n}{2}}$. If yes, we check if we can update the incumbent, and then stop processing the node (pruned by integrality). If not, we first apply a heuristic (described in more detail in Section~\ref{sec:heur}) to try to obtain an improved incumbent. Then, we check if the optimal objective function value of the relaxation $z^*$ is smaller than the objective function value of the current incumbent $z^{UB}$, if not, we stop processing the node (pruned by bound). If yes, we have to branch. 

To do so, the function \texttt{branchingIndex}$(x^*,I^-,I^+)$ returns the index of the variable on which we do the branching (explained in detail in Section~\ref{sec:binarybranch}). 
Then, we create two new nodes, one where the variable with the indicated index is fixed to $-1$, and a second one where the variable gets fixed to $1$. Both nodes are added to $\mathcal Q$ with priority $z^*$ with $\texttt{addNode()}$. This concludes the processing of a node. Algorithm~\ref{alg:BnB} terminates as soon as all nodes are processed.

\subsubsection{Primal heuristic}\label{sec:heur}
To enhance our basic \BB, within \texttt{heuristic}($X^*$) we have implemented a primal heuristic which is driven by the $X^*$-part of an optimal solution of the SDP relaxation in a node, following the single-objective heuristic introduced by \citet{anjos_semidefinite_2005} and further improved by \citet{ANJOS2006}.
For $ij \in I^2$, the heuristic calculates a score $w^{ij}_k$ for each facility $k\in I$ based on the values of $X^*$ in row~$ij$ as 
\begin{equation}
    \omega_k^{ij} = \frac{1}{2}\left(n + 1 + \sum_{\substack{\fourindex \in I\\\fourindex < k}} X^*_{ij,\fourindex k} - \sum_{\substack{\fourindex \in I\\\fourindex > k}} X^*_{ij,k\fourindex} \right). \notag
\end{equation}
The facilities are then sorted in increasing order according to this score. This sorting gives a permutation, which corresponds to a vector $\hat x \in\{-1,1\}^{\binom{n}{2}}$. If the solution $(\hat x,\hat X=\hat x \hat x^T)$ satisfies~\eqref{eq:BO-eps}, then it is feasible for $\myrefmath{BOSRFLP(\varepsilon)}$. In our heuristic, we apply the above procedure to all rows $ij$ of~$X^*$, and return the best feasible solution obtained (if any). All feasible solutions obtained in these procedures are used to fill the solution pool as described later in Section~\ref{sec:pooling}.
\subsection{Binary branching}\label{sec:binarybranch}
In this section, we first describe binary branching strategies to determine the branching index. Then, we detail a method that potentially allows us to fix additional variables directly after the branching decision, possibly reducing the number of \BB nodes that we need to explore. 

\subsubsection{Branching strategies}\label{sec:binStrat}
 We consider five different branching strategies for choosing a branching index within the function $\texttt{branchingIndex}(x^*,I^-,I^+)$:
\begin{itemize}
    \item The \texttt{mostInfeasible} branching strategy chooses an index $ij \in \argmin_{i'j'\in I^2 \setminus (I^- \cup I^+)} |x^*_{i'j'}|$, i.e., an index $ij$ with $x^*_{ij}$ closest to zero. 
    This corresponds to most fractional branching in a classical \BB for problems with binary variables.

    \item The \texttt{lengthWeighted} branching strategy is a modification of the \texttt{mostInfeasible} branching strategy that is weighted by the lengths of the facilities, thus giving higher priority to indices corresponding to facilities with longer lengths. 
    In particular, it chooses an index
    \[
        ij \in \argmax_{i'j' \in I^2 \setminus (I^- \cup I^+)} (\ell_{i'} + \ell_{j'}) (1-|x^*_{i'j'}|).
    \]

    \item The \texttt{connected} branching strategy prioritizes indices whose corresponding facilities are already involved in the fixings $I^-$ and $I^+$ at the current node, i.e., facilities in 
    \[
        \hat{I}:=\{i \in I: \exists i'j' \in I^-\cup I^+ \text{ such that } i \in \{i',j'\}\}.
    \]
    The strategy proceeds in three steps:
    \begin{enumerate}
        \item Consider all indices $ij \in I^2$ with $|x^*_{ij}|<1$ for which both facilities $i$ and $j$ are involved in a fixing, i.e., $i,j \in \hat{I}$. Among these indices, choose one where the value of $x^*_{ij}$ is closest to zero. If no such index exists, go to the next step.
        \item Consider all indices $ij \in I^2$ with $|x^*_{ij}|<1$ for which one of the facilities $i$ and $j$ is involved in a fixing, i.e., $i \in \hat{I}$ or $j \in \hat{I}$. Among these indices, choose one where the value of $x^*_{ij}$ is closest to zero. If no such index exists, go to the next step.
         \item Fall back to the \texttt{mostInfeasible} branching strategy.
    \end{enumerate}
    
    \item The \texttt{facilityFocused} branching strategy also prioritizes indices whose corresponding facilities are already involved in the fixings $I^-$ and $I^+$ at the current node, however, unlike the \texttt{connected} branching strategy, it also considers the number of times a facility is involved in the fixings. This narrows the focus on a small subset of indices. To avoid focusing too much on the same facilities, we disregard facilities that are more than $n/2$ times involved in fixings. More formally, let
    \[
        \hat{I}^2_i := \{i'j' \in I^- \cup I^+: i \in \{i', j'\}\}
    \]
    be the set of indices involved in fixings containing facility $i$. The strategy proceeds in two steps:
    \begin{enumerate}
    \item Disregard any facility $i$ with $|\hat{I}^2_i|>n/2$. Sort the remaining facilities $i$ in a non-increasing fashion according to $|\hat{I}^2_i|$ and let $i_1, i_2, i_3, \ldots$ denote the ordered facilities after the sorting. Then, start with $i_1$ and $i_2$, create the corresponding index ($i_1i_2$ or $i_2i_1$, depending on whether $i_1<i_2$ or not), without loss of generality $i_1i_2$. If $x^*_{i_1i_2} \notin \{-1,1\}$, choose this index. If $x^*_{i_1i_2}\in \{-1,1\}$, check the index associated with the pair $i_1$ and $i_3$. Continued (with first trying $i_1$ with all other indices, then $i_2$ with all other indices, and so on) until either finding an index, for which the corresponding variable in $x^*$ is not in $\{-1,1\}$, and choosing this index, or until all potential pairs are checked. In the latter case, go to the next step.

    \item Fall back to the \texttt{mostInfeasible} branching strategy.
    \end{enumerate}     

    \item The \texttt{disconnected} branching strategy aims to involve all facilities in some fixing early on, thus opposing the \texttt{connected} and \texttt{facilityFocused} branching strategies. It first considers all indices~$ij$, for which the facilities $i$ and $j$ are both not involved in a fixing at the current node, i.e., all indices~$ij$ with $|x^*_{ij}|<1$ and $i,j \in I \setminus \hat{I}$. Among these indices, it chooses one where the value of $x^*_{ij}$ is closest to zero. If no such index exists, the strategy proceeds with steps two and three of the \texttt{connected} branching strategy.
    
\end{itemize}

Finally, note that the objective function $f(\pi)$ of a permutation $\pi \in \Pi_n$ as defined in Definition~\ref{def:MO-SRFLP} is symmetric with respect to considering a permutation from left to right or from right to left, i.e., two permutations that differ only in the orientation have the same objective function value. Therefore, to avoid exploration of redundant nodes, for all binary branching strategies we can initially fix $x_{ij}$ to either $1$ or $-1$ for one $ij \in I^2$ for breaking symmetry. Specifically, in our implementation for all binary branchings we initialize $I^- = \emptyset$ and $I^+ = \{12\}$, i.e., we set $x_{12}=1$.

\subsubsection{Sequential fixing}\label{sec:sequential}
After choosing a branching index within \texttt{branchingIndex()} that induces one new fixing within each of the two subsequent branches of the \BB tree, we check if these new fixings within $I^-$ and $I^+$ have further implications. In particular, the constraints~\eqref{eq:MO-SRFLP-3Cycle} of our formulation stem from the 3-cycle-constraints~\eqref{eq:SOQ-cycle}, which therefore need to be satisfied by any feasible solution $x$. 
As a result, whenever two of the three variables $x_{ij}$, $x_{ik}$ and $x_{jk}$ for some $i<j<k$ occurring in~\eqref{eq:SOQ-cycle} are already fixed to~$1$ or~$-1$, the value of the third variable may be implied and thus can also be fixed in a \emph{sequential fixing}. 

More formally, let $i'j'$ be the branching index determined by \texttt{branchingIndex()} to be added to $I^-$ and $I^+$ to create the branches of the \BB tree. 
Then, to perform the sequential fixing, for each of the two new branches of the \BB tree and for every $k'\in I \setminus \{i',j'\}$, consider the reordering $i$, $j$, $k$ of $i'$, $j'$, $k'$ with $\{i,j,k\} = \{i',j',k'\}$ and $i<j<k$ and
\begin{itemize}[noitemsep]
\item if $ij \in I^+$ and $jk \in I^+$, then add $ik$ to $I^+$,
\item if $ij \in I^+$ and $ik \in I^-$, then add $jk$ to $I^-$,
\item if $ij \in I^-$ and $jk \in I^-$, then add $ik$ to $I^-$,
\item if $ij \in I^-$ and $ik \in I^+$, then add $jk$ to $I^+$,
\item if $ik \in I^+$ and $jk \in I^-$, then add $ij$ to $I^+$, and
\item if $ik \in I^-$ and $jk \in I^+$, then add $ij$ to $I^-$.
\end{itemize}

\subsection{Non-binary branching}\label{sec:nonBin}

The branching within the function \texttt{branchingIndex()} described so far is a binary branching in the sense that each branching decision results in two new nodes and thus two new branches in the \BB tree. An alternative to this binary branching is to construct all feasible solution to $\myrefmath{BOSRFLP(\varepsilon)}$, i.e., all permutations of $I=\{1,2,\dots,n\}$, by iteratively placing at each position in $\{1,2,\dots,n\}$ one facility in~$I$. We note that such a branching strategy is already briefly mentioned in \citet{Schwiddessen2020}.

In this strategy, we construct the permutations starting from the positions at the outside, alternating between locating a facility at the leftmost and rightmost free position. 
In particular, in the first branching step, a node is created for placing at position 1 (the leftmost position) the facility~$i$ for each facility $i \in I$.
In the next branching step within each of the nodes, a new node is created for placing at position $n$ (the rightmost position) the facility $j$ for every remaining facility $j\in I \setminus \{i\}$. This process is repeated for all remaining positions by placing all remaining facilities at position 2, then position $n-1$, and so on, until the permutation is complete. Clearly, in this way we enumerate all permutations.
    
\begin{subalgorithms}
\begin{algorithm}[htb]
    \caption{Non-binary branching adaptation that replaces line~\ref{line:init} in Algorithm~\ref{alg:BnB}}\label{alg:nonBinaryInit}
\begin{algorithmic}[1]
  \State $\mathcal{Q} \gets \texttt{addNode}(-\infty, \emptyset,\emptyset,\emptyset,true)$ \Comment{initialize the tree}
\end{algorithmic}
\end{algorithm}

\begin{algorithm}[htb]
    \caption{Non-binary branching adaptation that replaces line~\ref{line:pickNode} in Algorithm~\ref{alg:BnB}}\label{alg:nonBinaryUpdate}
\begin{algorithmic}[1]
    \State $(z^*, I^-,I^+,\bar{I},left) \gets \texttt{top}(\mathcal Q)$ 
    \Comment{consider node in $\mathcal{Q}$ with smallest $z^*$}
\end{algorithmic}
\end{algorithm}

\begin{algorithm}[htb]
    \caption{Non-binary branching adaptation that replaces lines~\ref{line:branch1}-\ref{line:branch3} in Algorithm~\ref{alg:BnB}}\label{alg:nonBinaryMain}
\begin{algorithmic}[1]
    \For{$i \in I \setminus \bar{I}$}
        \If{$left=true$} 
        \State $I^{++} = \{ij: j \in I \setminus\bar{I}, i < j\},\, I^{--} = \{ji: j \in I \setminus\bar{I}, i > j\}$
        \Else
        \State $I^{--} = \{ij: j \in I \setminus\bar{I}, i < j\},\, I^{++} = \{ji: j \in I \setminus\bar{I}, i > j\}$
        \EndIf
    \State $\mathcal{Q} \gets $ \texttt{addNode}$(z^*, I^-\cup I^{--}, I^+\cup I^{++}, \bar{I} \cup \{i\}, \textbf{not } left)$
    \EndFor
\end{algorithmic}
\end{algorithm}
\end{subalgorithms}

Our non-binary branching strategy is formally described by Algorithms~\ref{alg:nonBinaryInit},~\ref{alg:nonBinaryUpdate} and~\ref{alg:nonBinaryMain}, which replace the lines~\ref{line:init},~\ref{line:pickNode} and~\ref{line:branch1}-\ref{line:branch3} in Algorithm~\ref{alg:BnB}, respectively. 
Our strategy requires a redefinition of the elements in the priority queue~$\mathcal{Q}$, i.e., the nodes of the \BB tree. A node now consists of a tuple with five instead of three elements, where the two additional elements are the set of facilities that are already fixed $\bar{I} \subseteq I$ and a boolean flag \emph{left}, indicating if the next facility has to be placed at the leftmost (\emph{left}=\emph{true}) or rightmost (\emph{left}=\emph{false}) possible position. 

At the initialization in Algorithm~\ref{alg:nonBinaryInit} we set $I^- = I^+ = \bar{I} = \emptyset$ and $left=true$ because no facilities are fixed and we start to place facilities from left. The exploring of a new node in the \BB tree is then started in Algorithm~\ref{alg:nonBinaryUpdate}. When actually performing the branching in Algorithm~\ref{alg:nonBinaryMain}, for each facility $i \in I\setminus\bar{I}$ that is not already fixed, we determine the fixings to $-1$ and $1$ that are induced by placing $i$ at the leftmost or rightmost possible position within the two sets $I^{--}, I^{++} \subseteq I^2$.
Then, a node is added to the priority queue $\mathcal{Q}$ with the updated sets and the inverted flag \emph{left}.

The symmetry of the objective function mentioned in Section~\ref{sec:binStrat} can be broken for the non-binary branching strategy as well in the following way.
To avoid the exploration of redundant nodes within our \BB tree, we consider only those permutations, where the (index of the) facility on the leftmost position is smaller than the (index of the) facility of the rightmost position, i.e., $\pi^{-1}(1) < \pi^{-1}(n)$. 

Note that the sequential fixing described in Section~\ref{sec:sequential} does not have any effect on our non-binary branching strategy, because no additional fixings can occur within the sequential fixing by construction.

\subsection{Node relaxation reduction}\label{sec:reduction}
In principle, it is possible to implement the branching with one additional equality constraint~\eqref{eq:BOR-down} or~\eqref{eq:BOR-rest} per fixed variable, which results in solving $\myrefmath{SDP(\varepsilon,I^-,I^+)}$ as described above. 
However, it is well-known that the performance of interior-point based SDP solvers (a type of solver which we also use in our computations) deteriorates with an increasing number of constraints, see, e.g.,~\citet{BORCHERS2007}. 

It is now our goal to describe a different way of implementing the branching using a \emph{reduction} to overcome this obstacle.
It is based on the following result, which shows that the branching decisions~\eqref{eq:BOR-down} and~\eqref{eq:BOR-rest} have further implications for \myref{SDP($\varepsilon,I^-,I^+$)}.
\begin{observation}
\label{obs:red}
In \myref{SDP($\varepsilon,I^-,I^+$)}, any constraint~\eqref{eq:BOR-down} and~\eqref{eq:BOR-rest}, i.e., any constraint of the form $x_{ij} = \sigma$ for some $ij \in I^2$ with $\sigma \in \{-1,1\}$, implies 
\begin{align*}
    X_{ij,kh} = X_{kh,ij} = \sigma x_{kh} \quad \forall kh \in I^2.
\end{align*} 
Thus, the column of $X$ indexed by $ij$ equals $\sigma x$ and the row of $X$ indexed by $ij$ equals $\sigma x^T$.
\end{observation}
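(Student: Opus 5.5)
The plan is to use only two constraints of \myref{SDP($\varepsilon,I^-,I^+$)}: the positive semidefiniteness constraint~\eqref{eq:MO-SRFLP-psd} and the diagonal constraint~\eqref{eq:MO-SRFLP-diag}. The fixing $x_{ij}=\sigma$ with $\sigma^2=1$ forces a certain $2\times 2$ principal submatrix to be singular. A psd matrix whose quadratic form vanishes at a vector must map that vector to zero, and reading off this kernel identity gives the claimed column. This is the same mechanism as in the proof of Observation~\ref{obs:rank-x_integer}, but applied to a single row and column.

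First, let $Y=\begin{pmatrix}1 & x^T\\ x & X\end{pmatrix}$, with rows and columns indexed by $0$ and by $I^2$. By~\eqref{eq:MO-SRFLP-diag} we have $X_{ij,ij}=1$. Together with $x_{ij}=\sigma$, the principal submatrix of $Y$ on the indices $\{0,ij\}$ is $\begin{pmatrix}1&\sigma\\ \sigma&1\end{pmatrix}$.

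Next, take the vector $v=e_0-\sigma e_{ij}$. Then
\[
v^TYv = Y_{0,0}-2\sigma Y_{0,ij}+\sigma^2 Y_{ij,ij} = 1-2\sigma^2+\sigma^2 = 0.
\]
Since $Y\succeq 0$, we can write $Y=B^TB$. Then $v^TYv=\|Bv\|^2=0$, so $Bv=0$ and hence $Yv=0$. Equivalently, one can use Cauchy--Schwarz for psd matrices, $(u^TYv)^2\le (u^TYu)(v^TYv)=0$ for every $u$.

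The equation $Yv=0$ states that column $0$ of $Y$ equals $\sigma$ times column $ij$. Row $kh$ of this identity reads $x_{kh}=\sigma X_{kh,ij}$. Multiplying by $\sigma$ and using $\sigma^2=1$ gives $X_{kh,ij}=\sigma x_{kh}$ for all $kh\in I^2$. For $kh=ij$ this is consistent, since it gives $1=\sigma^2$. Symmetry of $X$ then gives $X_{ij,kh}=\sigma x_{kh}$, which is the row statement.

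The main step is the implication $v^TYv=0\Rightarrow Yv=0$ for psd $Y$. It is standard, so I expect no real obstacle. The only care needed is that the argument uses just~\eqref{eq:MO-SRFLP-diag} and~\eqref{eq:MO-SRFLP-psd}, so it holds regardless of the remaining constraints.
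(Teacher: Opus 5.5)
Your proof is correct, but it takes a different route from the paper. The paper works entry by entry. For each $kh$ it takes the $3\times 3$ principal submatrix of the matrix in \eqref{eq:MO-SRFLP-psd} on the rows and columns indexed by $1$ (your index $0$), $ij$ and $kh$. With $x_{ij}=\sigma$ and unit diagonal, its determinant expands to $-(X_{ij,kh}-\sigma x_{kh})^2$, and nonnegativity of this principal minor forces $X_{ij,kh}=\sigma x_{kh}$.

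You instead note that the $2\times 2$ block on $\{0,ij\}$ is singular, take $v=e_0-\sigma e_{ij}$ with $v^TYv=0$, and use that a psd matrix annihilates any vector on which its quadratic form vanishes. This gives the whole column identity in one step and needs no determinant expansion, which the paper leaves implicit. It also covers $kh=ij$ uniformly, whereas for the minor argument that case is degenerate and is settled directly by \eqref{eq:MO-SRFLP-diag}.

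The two arguments are views of the same fact: the $3\times 3$ minor equals $-(X_{ij,kh}-\sigma x_{kh})^2$ precisely because that $2\times 2$ block is singular. Both use only \eqref{eq:MO-SRFLP-diag} and \eqref{eq:MO-SRFLP-psd}. The paper's version needs only the elementary principal-minor criterion on small submatrices. Yours is shorter and states the linear dependence explicitly (column $0$ equals $\sigma$ times column $ij$), which is the form in which the observation is used in Proposition~\ref{prop:red}.
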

\begin{proof}
Due to~\eqref{eq:MO-SRFLP-diag} and the SDP-constraint~\eqref{eq:MO-SRFLP-psd}, this result follows 
by using the non-negativity of the determinant of the $3\times3$ submatrix of 
$\begin{pmatrix}
1 & x^T \\
x & X
\end{pmatrix}$ 
that corresponds to the columns and rows indexed by~$1$, $ij$ and $kh$. 
\end{proof}

We exploit Observation~\ref{obs:red} by using the variable fixings within $I^-$ and $I^+$ caused by the branching and the sequential fixing described in Section~\ref{sec:sequential} to obtain a reduced SDP equivalent to $\myrefmath{SDP(\varepsilon,I^-,I^+)}$ with a smaller number of variables and constraints, which directly incorporates the fixings. 
In particular, the following holds.

\begin{proposition}    \label{prop:red}
Consider a node in the B\&B tree with given $\varepsilon$, $I^-$ and $I^+$.
Let $I^2_F = I^- \cup I^+$ and $I^2_R = I^2\setminus I^2_F$ be the fixed and non-fixed indices of $I^2$, respectively. 
Let $\sigma^F \in \{-1,1\}^{|I^2_F|}$ be the vector of all fixings in $I^2_F$. 

Then $\myrefmath{SDP(\varepsilon,I^-,I^+)}$ is equivalent to 
\begin{subequations}
    \begin{align}
        \mytag{SDP^R(\varepsilon,I^-,I^+)} \quad z = \min \quad &K_1^R +\langle C_1^R,X^R \rangle + \langle c_1^R, x^R \rangle \label{eq:SDPR-obj}\\
        \text{s.t.} \quad & K_2^R + \langle C_2^R, X^R \rangle + \langle c_2^R, x^R \rangle \leq \varepsilon \label{eq:SDPR-eps} \\ 
         & \sum_{\substack{k \in I, \\ i \neq k \neq j}} \widetilde{X}_{ij,jk} - \widetilde{X}_{ij,ik} - \widetilde{X}_{ik,jk} = -(n-2) &\quad&  \label{eq:SDPR-3Cycle}
         \forall i, j \in I, i < j \\
                            & \diag(X^R) = e \label{eq:SDPR-diag} \\
                            & \begin{pmatrix}
                                1 & (x^R)^T \\
                                x^R & X^R
                            \end{pmatrix}\succeq 0 \label{eq:SDPR-psd},
    \end{align}
\end{subequations}
where $x^R \in \mathbb R^{|I^2_R|}$, $X^R \in \mathbb R^{|I^2_R|\times|I^2_R|}$, 

\begin{align} \label{eq:fixingImplications}
\widetilde{X}_{ij,kh} = 
    \begin{cases}
        X^R_{ij,kh} & \text{if } ij,kh \in I^2_R \\
        \sigma^F_{ij}x^R_{kh} & \text{if } ij \in I^2_F, kh \in I^2_R\\        
        \sigma^F_{kh}x^R_{ij} & \text{if } kh \in I^2_F, ij \in I^2_R\\
        \sigma^F_{ij}\sigma^F_{kh} & \text{if } ij,kh \in I^2_F,
    \end{cases}
\end{align}
for all $ij, kh \in I^2$ (and thus for all $i,j,k,h \in I$ with $i\neq j$ and $k \neq h$ using~\eqref{eq:transformationileqj} to obtain $i<j$ and $k<h$), 
and $K_q^R = K_q + \langle C_{q_{I^2_F, I^2_F}}, \sigma^F(\sigma^F)^T \rangle$, $C_q^R = C_{q_{I^2_R, I^2_R}}$ and $c_q^R = 2(C_{q_{I^2_F,I^2_R}})^T \sigma^F$ for all $q\in \{1,2\}$. 
\end{proposition}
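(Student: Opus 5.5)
We will show the equivalence by giving explicit maps between the feasible sets of $\myrefmath{SDP(\varepsilon,I^-,I^+)}$ and $\myrefmath{SDP^R(\varepsilon,I^-,I^+)}$ that preserve the objective value. Permute the index set $I^2$ so that the free indices $I^2_R$ come first and the fixed indices $I^2_F$ come last. Write $x=(x_R,x_F)$ and split $X$ into the blocks $X_{RR}$, $X_{RF}$ and $X_{FF}$.

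\emph{Forward direction.} Take a feasible $(x,X)$ of the original SDP. Constraints~\eqref{eq:BOR-down} and~\eqref{eq:BOR-rest} give $x_F=\sigma^F$. Observation~\ref{obs:red}, applied to every fixed index, gives $X_{RF}=x_R(\sigma^F)^T$ and $X_{FF}=\sigma^F(\sigma^F)^T$.

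Set $x^R=x_R$ and $X^R=X_{RR}$. Then $X$ coincides entrywise with $\widetilde X$ from~\eqref{eq:fixingImplications}. This also holds for indices with $i>j$, because both sides transform the same way via~\eqref{eq:transformationileqj}. Hence~\eqref{eq:MO-SRFLP-3Cycle} becomes~\eqref{eq:SDPR-3Cycle}, and~\eqref{eq:SDPR-diag} is inherited from~\eqref{eq:MO-SRFLP-diag}. The PSD constraint~\eqref{eq:SDPR-psd} holds because its matrix is a principal submatrix of the matrix in~\eqref{eq:MO-SRFLP-psd}.

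For the objective, expand the inner product by blocks:
\[
\langle C_q,X\rangle=\langle C_{q_{I^2_R,I^2_R}},X_{RR}\rangle+2\langle C_{q_{I^2_R,I^2_F}},X_{RF}\rangle+\langle C_{q_{I^2_F,I^2_F}},X_{FF}\rangle .
\]
Substituting the blocks, the middle term becomes $2(x^R)^T C_{q_{I^2_R,I^2_F}}\sigma^F=\langle c_q^R,x^R\rangle$. Therefore $K_q+\langle C_q,X\rangle=K_q^R+\langle C_q^R,X^R\rangle+\langle c_q^R,x^R\rangle$ for $q=1,2$. This transforms both the objective~\eqref{eq:BOR-obj} and the $\varepsilon$-constraint~\eqref{eq:BOR-eps} correctly.

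\emph{Backward direction.} Take a feasible $(x^R,X^R)$ of the reduced SDP. Define $x=(x^R,\sigma^F)$ and $X=\widetilde X$. The 3-cycle constraints, the diagonal constraint (since $(\sigma^F_{ij})^2=1$), the fixings and the objective identity then hold by the same computation read backwards.

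The only nontrivial point is positive semidefiniteness, and this is the step I expect to be the main obstacle. Write the full matrix as $M=\begin{pmatrix}1 & (x^R)^T & (\sigma^F)^T\\ x^R & X^R & x^R(\sigma^F)^T\\ \sigma^F & \sigma^F(x^R)^T & \sigma^F(\sigma^F)^T\end{pmatrix}$. Its last block of rows and columns is $\sigma^F$ times the first row and column. I plan to handle this with a congruence: let $T$ be the matrix that subtracts $\sigma^F$ times the first row and column from the $F$-block. Then
\[
T M T^T=\begin{pmatrix}1 & (x^R)^T & 0\\ x^R & X^R & 0\\ 0&0&0\end{pmatrix},
\]
which is PSD by~\eqref{eq:SDPR-psd}. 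Congruence preserves semidefiniteness, so $M\succeq 0$. Equivalently, $M=P\begin{pmatrix}1 & (x^R)^T\\ x^R & X^R\end{pmatrix}P^T$ with $P=\begin{pmatrix}1&0\\0&I\\ \sigma^F&0\end{pmatrix}$.

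Since the maps in both directions preserve feasibility and objective values, the optimal values coincide, which proves the equivalence.
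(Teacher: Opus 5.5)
Your proof is correct and takes essentially the same approach as the paper: split $x$ and $X$ into free and fixed blocks, use Observation~\ref{obs:red} to write the mixed and fixed blocks in terms of $x^R$ and $\sigma^F$, and then check the 3-cycle, diagonal, objective and $\varepsilon$-constraints blockwise. For positive semidefiniteness, your factorization of the full matrix as $P\begin{pmatrix}1 & (x^R)^T\\ x^R & X^R\end{pmatrix}P^T$ is the paper's quadratic-form identity in matrix form: the paper evaluates the form at $(a,b,c)$ and obtains the reduced form at $(a+c^T\sigma^F,\,b)$.
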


\begin{proof}
    The variable $x$ in $\myrefmath{SDP(\varepsilon,I^-,I^+)}$ can be split into $x^R \in \mathbb R^{|I^2_R|}$ and $x^F \in \mathbb R^{|I^2_F|}$ with $x^F$ representing all entries of $x$ that correspond to fixings in $I^2_F$ as 
    $x=\begin{pmatrix} x^R \\ x^F \end{pmatrix}$. 
    Similarly, $X$ can be split into 
    $X=\begin{pmatrix} X^R & (X^{FR})^T\\ X^{FR} & X^F \end{pmatrix}$ with $X^R \in \mathbb R^{|I^2_R|\times|I^2_R|}$, $X^{FR} \in \mathbb R^{|I^2_F|\times|I^2_R|}$ and $X^F \in \mathbb R^{|I^2_F|\times|I^2_F|}$.

    With this notation, the constraints~\eqref{eq:BOR-down} and~\eqref{eq:BOR-rest} of $\myrefmath{SDP(\varepsilon,I^-,I^+)}$ are equivalent to $x^F = \sigma^F$. From Observation~\ref{obs:red} it then follows that $X^{FR} = \sigma^F(x^R)^T$ and $X^F = \sigma^F(x^F)^T =\sigma^F(\sigma^F)^T$ hold.
    As a consequence, it is easy to see that constraint~\eqref{eq:MO-SRFLP-diag} is equivalent to~\eqref{eq:SDPR-diag}. Moreover, the constraint~\eqref{eq:MO-SRFLP-3Cycle} is equivalent to~\eqref{eq:SDPR-3Cycle} by using the definition of $\widetilde{X}$. 
    Furthermore, by easy computations that exploit the properties of the Frobenius inner product~\eqref{eq:BOR-obj} and~\eqref{eq:BOR-eps} and equivalent to~\eqref{eq:SDPR-obj} and~\eqref{eq:SDPR-eps}.
    
\newcommand{\myVecE}{a} 
\newcommand{\myVecR}{b} 
\newcommand{\myVecF}{c} 
Finally, for each $\myVecE \in \mathbb{R}$, $\myVecR \in \mathbb R^{|I^2_R|}$ and $\myVecF \in \mathbb R^{|I^2_F|}$ it holds that 
\begingroup
\allowdisplaybreaks
\begin{align*}
    \begin{pmatrix} \myVecE \\ \myVecR \\ \myVecF \end{pmatrix}^T
    &\begin{pmatrix}
        1   & (x^R)^T & (x^F)^T \\
        x^R & X^R   & (X^{FR})^T \\
        x^F & X^{FR} & X^F  \end{pmatrix}
    \begin{pmatrix}\myVecE \\ \myVecR \\ \myVecF \end{pmatrix} 
    = \begin{pmatrix} \myVecE \\ \myVecR \\ \myVecF \end{pmatrix}^T 
    \begin{pmatrix*}[l]
        \myVecE + (x^R)^T \myVecR + (x^F)^T \myVecF \\ 
        \myVecE x^R + X^R \myVecR + (X^{FR})^T \myVecF\\ 
        \myVecE x^F + X^{FR} \myVecR + X^F \myVecF \end{pmatrix*}\\
    = & \myVecE^2 + \myVecE (x^R)^T \myVecR + \myVecE (x^F)^T \myVecF 
     + \myVecE \myVecR^T x^R + \myVecR^T X^R \myVecR + \myVecR^T (X^{FR})^T \myVecF 
     + \myVecE \myVecF^T x^F + \myVecF^T X^{FR} \myVecR + \myVecF^T X^F \myVecF\\
    = & \myVecE^2 + 2 \myVecE \myVecR^T x^R + 2 \myVecE \myVecF^T x^F 
      + \myVecR^T X^R \myVecR + 2 \myVecF^T X^{FR} \myVecR 
      + \myVecF^T X^F \myVecF\\ 
    = & \myVecE^2 + 2 \myVecE \myVecR^T x^R + 2 \myVecE \myVecF^T \sigma^F 
      + \myVecR^T X^R \myVecR + 2 \myVecF^T \sigma^F(x^R)^T \myVecR 
      + \myVecF^T \sigma^F(\sigma^F)^T \myVecF\\ 
    = & (\myVecE + \myVecF^T \sigma^F)^2 + 2 (\myVecE + \myVecF^T \sigma^F) \myVecR^T x^R 
      + \myVecR^T X^R \myVecR \\ 
    = & \begin{pmatrix} \myVecE + \myVecF^T \sigma^F \\ \myVecR \end{pmatrix}^T
    \begin{pmatrix}
        1 & (x^R)^T \\
        x^R & X^R \\
    \end{pmatrix} 
    \begin{pmatrix} \myVecE + \myVecF^T \sigma^F \\ \myVecR \end{pmatrix},
\end{align*}
\endgroup
implying that~\eqref{eq:MO-SRFLP-psd} is equivalent to~\eqref{eq:SDPR-psd}. Thus, $\myrefmath{SDP(\varepsilon,I^-,I^+)}$ is equivalent to $\myrefmath{SDP^R(\varepsilon,I^-,I^+)}$.
\end{proof}

As a result of Proposition~\ref{prop:red}, for each fixed variable within a \BB node we can remove the corresponding entry of $x$ and the corresponding row and columns from $X$ and only have to solve the reduced SDP $\myrefmath{SDP^R(\varepsilon,I^-,I^+)}$.
Note that $\myrefmath{SDP^R(\varepsilon,I^-,I^+)}$ is equivalent to replacing all variables by their value determined in the variable fixings and utilizing all implications of doing so in $\myrefmath{SDP(\varepsilon,I^-,I^+)}$ for both~$x$ and~$X$, which is explicitly visible in~\eqref{eq:fixingImplications}.

We note that similar approaches for reductions have been used in the code of the branch-and-bound solvers BiqCrunch by \citet{BiqCrunch} and BiqBin by \citet{BiqBin2022} for binary quadratic problems; however, their respective reduction procedures have not been described in the respective papers. For binary quadratic problems, the relaxation reduction was shown formally in \citet{Buchheim2012}. Furthermore, it is well known that the matrix formulation for the max-cut problem can be reduced in the dimension of the variable matrix, see for example \citet{Helmberg1998}. 

\section{Multi-objective enhancements}\label{sec:MO-Enhance}
In this section we present two enhancements of our solution algorithm for obtaining the nondominated set of the \BOSRFLP which are specific for the multi-objective setting, namely a tree reusing technique to warm start the \BB algorithms for different values of $\varepsilon$ and a solution pooling.

\subsection{Tree reusing}\label{sec:tree}
In our solution approach based on the $\varepsilon$-constraint method detailed in Section~\ref{sec:algorithm}, within each iteration we solve $\myrefmath{BOSRFLP(\varepsilon)}$ for a different value of $\varepsilon$. 
With our \emph{tree reusing} we exploit this 
to provide an extensive warm start for each iteration except the first one. This warm start consists of not starting the \BB algorithm from scratch to solve $\myrefmath{BOSRFLP(\varepsilon)}$ for the $\varepsilon$ of this iteration, but initializing it with \BB nodes from the previous iteration with the previous $\varepsilon$. 

Let $N(\varepsilon)$ be the set of $(I^-,I^+)$ of \BB nodes which got pruned by integrality (line~\ref{line:integrality}) or pruned by bound (lines~\ref{line:bound1},~\ref{line:bound3} and~\ref{line:bound2}) after the execution of Algorithm~\ref{alg:BnB} when solving $\myrefmath{BOSRFLP(\varepsilon)}$ for a given $\varepsilon$. Moreover, let $F(\varepsilon)=\{(x,X):\eqref{eq:BO-eps}-\eqref{eq:BO-rest} \}$ be the feasible region of $\myrefmath{BOSRFLP(\varepsilon)}$ and $F(\varepsilon,I^-,I^+)=\{(x,X):\eqref{eq:BOR-eps}-\eqref{eq:BOR-rest}\}$ be the feasible region of $\myrefmath{SDP(\varepsilon,I^-,I^+)}$ for given $I^-,I^+ \subseteq I^2$. Using this notation, we can make the following simple observation.

\begin{observation}\label{ob:obseps}
    We have that $F(\varepsilon') \subseteq \bigcup_{(I^-,I^+) \in N(\varepsilon)} F(\varepsilon,I^-,I^+) $ for all $\varepsilon'\leq \varepsilon$.
\end{observation}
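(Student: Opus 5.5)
We prove the inclusion by tracking a single feasible point through the \BB tree. Fix $\varepsilon' \leq \varepsilon$ and $(x,X) \in F(\varepsilon')$. Then $(x,X)$ satisfies~\eqref{eq:MO-SRFLP-3Cycle}--\eqref{eq:MO-SRFLP-xint}, and $K_2+\langle C_2,X\rangle \leq \varepsilon' \leq \varepsilon$. Hence $(x,X) \in F(\varepsilon)$, and it suffices to prove $F(\varepsilon) \subseteq \bigcup_{(I^-,I^+) \in N(\varepsilon)} F(\varepsilon,I^-,I^+)$.

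We say a node $(I^-,I^+)$ \emph{contains} $(x,X)$ if $x_{ij}=-1$ for all $ij \in I^-$ and $x_{ij}=1$ for all $ij\in I^+$. Since $(x,X)\in F(\varepsilon)$ satisfies all remaining constraints of $\myrefmath{SDP(\varepsilon,I^-,I^+)}$, containment is equivalent to $(x,X)\in F(\varepsilon,I^-,I^+)$.

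We then follow $(x,X)$ down the tree built by Algorithm~\ref{alg:BnB}. The root $(\emptyset,\emptyset)$ contains $(x,X)$. If the symmetry-breaking fixing $x_{12}=1$ is used, we replace $(x,X)$ by its mirrored permutation: it has the same objective values, hence lies in $F(\varepsilon)$ as well, and the statement should be read up to this symmetry. If sequential fixing or non-binary branching is used, the same argument goes through. Sequential fixing only adds fixings implied by the 3-cycle constraints, which $x$ satisfies. Non-binary branching creates children that partition the permutations consistent with the parent node (up to the orientation symmetry).

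Whenever a node containing $(x,X)$ is branched on index $ij$, exactly one child agrees with $x_{ij}\in\{-1,1\}$, so that child contains $(x,X)$. Every node added to $\mathcal Q$ eventually leaves it: either it is popped and processed, or it is discarded at the \textbf{break} in line~\ref{line:bound3}, which counts as pruned by bound. Moreover, branching strictly enlarges $I^-\cup I^+$, so the tree is finite.

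Following the children that contain $(x,X)$ therefore yields a finite path ending in a node that contains $(x,X)$ and is not branched on. Such a node was pruned by bound, pruned by integrality, or pruned by infeasibility. Infeasibility is impossible, because $(x,X)$ is a feasible point of that node's SDP relaxation. So the node is pruned by bound or by integrality, i.e.\ it lies in $N(\varepsilon)$, and $(x,X)\in F(\varepsilon,I^-,I^+)$ for it.

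The main obstacle is bookkeeping rather than mathematics. First, the nodes discarded at the \textbf{break} must be counted as pruned by bound, which the definition of $N(\varepsilon)$ explicitly includes. Second, the symmetry breaking and the enhanced branching variants have to be handled as described above.
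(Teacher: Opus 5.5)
Your proof is correct and follows the paper's route. You reduce to $F(\varepsilon')\subseteq F(\varepsilon)$ and then observe that every point of $F(\varepsilon)$ lies in some leaf of the tree not pruned by infeasibility; your path-tracking argument just makes the paper's appeal to ``correctness of B\&B'' explicit. Your caveat on symmetry breaking is well taken and is a point the paper's proof passes over: with $x_{12}=1$ fixed at the root (or the analogous restriction for non-binary branching), the inclusion holds only up to the mirroring $(x,X)\mapsto(-x,X)$, which is harmless for tree reusing since mirrored solutions have identical objective values.
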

\begin{proof}
Clearly, we have $F(\varepsilon) \subseteq \bigcup_{(I^-,I^+) \in N(\varepsilon)} F(\varepsilon,I^-,I^+)$ by the correctness of \BB, as the only \BB-nodes we are not considering in the union are the ones which give an infeasible relaxation $\myrefmath{SDP(\varepsilon,I^-,I^+)}$. Moreover, $F(\varepsilon')\subseteq F(\varepsilon)$ for $\varepsilon'\leq\varepsilon$ is obvious, as both feasible regions stem from the same constraints, except that~\eqref{eq:BO-eps} is more restrictive for $F(\varepsilon')$. Combining both of these facts gives the observation.
\end{proof}

From Observation~\ref{ob:obseps} we get the following theorem, where we have $F(\varepsilon',I^-,I^+)$ on the right-hand-side instead of $F(\varepsilon,I^-,I^+)$.
\begin{theorem}\label{thm:thmeps}
We have that $F(\varepsilon') \subseteq \bigcup_{(I^-,I^+) \in N(\varepsilon)} F(\varepsilon',I^-,I^+) $ for all $\varepsilon'\leq \varepsilon$.
\end{theorem}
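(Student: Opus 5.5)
The plan is to prove the inclusion pointwise, using Observation~\ref{ob:obseps} to locate each feasible point in some node and then tightening the $\varepsilon$-constraint of that node. Fix $\varepsilon' \leq \varepsilon$ and an arbitrary $(x,X) \in F(\varepsilon')$. We must exhibit one pair $(I^-,I^+) \in N(\varepsilon)$ with $(x,X) \in F(\varepsilon',I^-,I^+)$.

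First, by Observation~\ref{ob:obseps}, there is some $(I^-,I^+) \in N(\varepsilon)$ with $(x,X) \in F(\varepsilon,I^-,I^+)$. Concretely, $(x,X)$ then satisfies \eqref{eq:MO-SRFLP-3Cycle}--\eqref{eq:MO-SRFLP-psd} together with the fixings \eqref{eq:BOR-down} and \eqref{eq:BOR-rest} for this particular pair $(I^-,I^+)$.

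Second, we compare the two feasible regions $F(\varepsilon,I^-,I^+)$ and $F(\varepsilon',I^-,I^+)$. They are defined by exactly the same constraints with one exception: the right-hand side of \eqref{eq:BOR-eps} is $\varepsilon$ in the first and $\varepsilon'$ in the second. Hence it remains only to verify \eqref{eq:BOR-eps} with $\varepsilon'$, that is, $K_2 + \langle C_2, X\rangle \leq \varepsilon'$.

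Third, this inequality holds because $(x,X) \in F(\varepsilon')$ means that \eqref{eq:BO-eps} is satisfied with right-hand side $\varepsilon'$, and \eqref{eq:BO-eps} and \eqref{eq:BOR-eps} are literally the same inequality. Therefore $(x,X) \in F(\varepsilon',I^-,I^+)$, and since $(x,X)$ was arbitrary, the claimed inclusion follows.

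The only point needing care is to keep the node $(I^-,I^+)$ produced by Observation~\ref{ob:obseps} for the same $(x,X)$. We then transfer the $\varepsilon'$-constraint from membership in $F(\varepsilon')$, rather than trying to shrink $F(\varepsilon,I^-,I^+)$ as a whole, which would not be contained in the union. I expect no real obstacle beyond this bookkeeping.
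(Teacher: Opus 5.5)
Your proposal is correct and follows the same argument as the paper: use Observation~\ref{ob:obseps} to place $(x,X)$ in some node region $F(\varepsilon,I^-,I^+)$, then note that the only differing constraint \eqref{eq:BOR-eps} already holds with right-hand side $\varepsilon'$ because $(x,X)\in F(\varepsilon')$. Your direct pointwise phrasing is simply a cleaner version of the paper's ``no such counterexample can exist'' formulation.
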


\begin{proof}
Due to Observation~\ref{ob:obseps} it suffices to show that there is no solution $(x,X) \in F(\varepsilon')$ which is in some $F(\varepsilon,I^-,I^+)$ but not in any $F(\varepsilon',I^-,I^+)$. It is easy to see that this is not possible, as the constraints defining $F(\varepsilon,I^-,I^+)$ and $F(\varepsilon',I^-,I^+)$ are the same, with the only difference being the right-hand side of~\eqref{eq:BOR-eps}, and each solution in $F(\varepsilon')$ has to fulfill~\eqref{eq:BOR-eps} in the more restrictive version in $F(\varepsilon',I^-,I^+)$.
\end{proof}
From Theorem~\ref{thm:thmeps}, we get that we can initialize $\mathcal Q$ in line~\ref{line:init} of the Algorithm~\ref{alg:BnB} for solving $\myrefmath{BOSRFLP(\varepsilon')}$ with $N(\varepsilon)$ for any $\varepsilon>\varepsilon'$ and the algorithm remains correct. In our implementation, we initialize $\mathcal Q$ with $N(\varepsilon)$ from the previous iteration with the previous value for $\varepsilon$. As priority $z^*$ for each $(I^-,I^+) \in N(\varepsilon)$ we use the optimal objective function value of the respective relaxation from the previous iteration $\myrefmath{SDP(\varepsilon,I^-,I^+)}$. In the case of non-binary branching, we inherit the same values for $\bar{I}$ and $left$ for each node from the corresponding node of the previous iteration.

Note that by checking if the optimal objective function value $z^*$ of the respective relaxation from the previous iteration is better than the objective function value of the current incumbent $z^{UB}$ in line~\ref{line:bound1} in Algorithm~\ref{alg:BnB} before solving the SDP relaxations, we avoid unnecessary computations because the node (and thus all remaining nodes) can potentially be pruned by bound without solving the respective SDP relaxation.

We note that in \citet{KLEIN1982} a similar reusing technique was proposed within a disjunctive programming framework.
This reusing technique was applied for a \BB algorithm in \citet{SAYIN1999435}, where the authors solved two ILPs per $\varepsilon$-iteration and therefore had to manage two \BB trees, whereas we solve only one integer program in each iteration.

\subsection{Solution pooling}\label{sec:pooling}
Another technique we use which exploits the iterative nature of the $\varepsilon$-constraint method is \emph{solution pooling}. For that, we store all the feasible solutions $(x,X)$ we find when solving $\myrefmath{BOSRFLP(\varepsilon)}$ for some~$\varepsilon$, i.e., solutions obtained in the primal heuristic described in Section~\ref{sec:heur} and solutions $(x,X)$ of the SDP relaxation with $x \in \{-1,1\}^{\binom{n}{2}}$, in a set $S$. 
Before the start of a new iteration of the $\varepsilon$-constraint method, we check $S$ for a good starting solution, i.e., we determine
\[(x^*,X^*) \in \argmin_{(x,X) \in S} \langle C_1,X \rangle \text{ s.t. } K_2 + \langle C_2,X \rangle \leq \varepsilon\]
and initialize our \BB with this solution $(x^*,X^*)$ (if any). We also remove all solutions $(x,X)$ with $K_2 + \langle C_2,X \rangle > \varepsilon$ from~$S$, because they will not be feasible in all remaining iterations.

\section{Computational results}\label{sec:comp-results}

We implemented our solution algorithm in Julia using the JuMP modeling framework for mathematical programming developed by \citet{Lubin2023}. The solver MOSEK 11.1.11 was used to solve the SDP relaxations in line~\ref{line:solve} of Algorithm~\ref{alg:BnB}. All settings of MOSEK were left on their default values. The computations were performed on a single core of an Intel Xeon~X5770 CPU with 2.93~GHz and 6~GB of RAM, and the time limit for each run was set to 18000~seconds (5~hours).

We utilize the just-in-time compiler of Julia to cache machine code and reduce subsequent execution times. To ensure that our computational results are repeatable, before solving each instance we execute our algorithm with a small ``warm-up'' instance and we do not include the time of solving this ``warm-up'' instance into our results. 

In all instances that we tested, the costs $c_{ij}^q$ are non-negative integers and the lengths $\ell_i$ are positive integers, thus the smallest possible objective function value difference mentioned in~\eqref{eq:delta} is at least one, so we use $\delta=1$ in all computations.

\subsection{Instances}\label{sec:instances}
As our work is the first to consider the \BOSRFLP, there are no benchmark instances from the literature. We thus created instances by i) combining existing instances for the single-objective \SRFLP and by ii) following and adapting instance creation strategies for the \SRFLP in the following way:
\begin{enumerate}[label=(\roman*)]
\item \texttt{combined} instances: We selected an existing single-objective instance (see Table~\ref{tab:litInstances} for an overview, and note that the number in the instance name represents $n$) as the primary instance and used the facility lengths as well as the pairwise cost between the facilities from this instance as $\ell_i$ and~$c^1_{ij}$. We then paired this primary instance with another existing single-objective instance of the same size, from which we used the pairwise cost between the facilities as $c^2_{ij}$. For example, the \SRFLP instances S9 and S9H from \citet{SIMMONS1969} were merged to form the two \BOSRFLP instances S9\_S9H and S9H\_S9, where the first instance name indicates the primary instance.
Note that these two \BOSRFLP instances do not coincide, because the lengths stem from two different \SRFLP instances and thus their nondominated sets differ in general. We only considered instances with fewer than 20 facilities, as preliminary computations showed that a larger number of facilities led to instances that could not be solved within our considered time limit. We obtained 13 instances using this approach.\footnote{Note that the instances AM15 and H15 consist of the same values for $c_{ij}$ for $i,j \in I, i<j$ and only differ in the lengths of the facilities $\ell_i$ for $i\in I$. Therefore, combining these two instances would result in single-objective instances again. Also we only need to combine SRFLP15 with either AM15 or H15.}
\begin{table}[htb]
    \centering
    \begin{tabular}{llr}
    \toprule
        instance & source & density\\
        \midrule
        S9 &  \citet{SIMMONS1969} & 0.8611\\
        S9H &  \citet{SIMMONS1969} & 1.0000\\
        SRFLP9 & \citet{Hungerlander2012Tech} & 0.5556\\
        S10 & \citet{SIMMONS1969} & 0.8000\\
        SRFLP10 & \citet{Hungerlander2012Tech} & 0.6222\\
        LW11 & \citet{LOVE1976}& 0.8727\\
        S11 & \citet{SIMMONS1969} & 0.8727\\
        AM15 & \citet{AMARAL2006} & 0.7143\\
        H15 & \citet{HERAGU1991} & 0.7143\\
        SRFLP15 & \citet{Hungerlander2012Tech} & 0.5238\\      
        \bottomrule
    \end{tabular}
    \caption{Instances from the literature}
    \label{tab:litInstances}
\end{table}
\item \texttt{random} instances: We followed the approach of \citet{PALUBECKIS2017} for the \SRFLP and created instances for each $n \in \{10, 11, \dots, 20\}$. In this approach, the lengths $\ell_i$ for $i \in I$ are randomly drawn as uniformly distributed integers from $1$ to $r$, where $r \in \{10,20\}$. The pairwise costs of the facilities $c_{ij}^q$ for $i,j \in I, i < j$ and $q \in \{1,2,\dots,p\}$ are generated by selecting $\binom{n}{2}$ random integers in the range from $0$ to $r$, where $0$ is selected with probability $1-d/100$, and each positive integer from $1$ to $r$ with probability $d/(100r)$, where the density $d\in\{50,70,90\}$, as usual in literature instances, see Table~\ref{tab:litInstances}. An instance is denoted by R\_$n$\_$d$\_$r$\_$id$ where $id \in \{1, 2\}$ is a number to distinguish instances which were created using the same parameter values. We obtained 132 instances this way.
\end{enumerate}

\subsection{Investigation of the impact of our enhancements}\label{sec:ingredients}
In this section, we analyze the impact of our algorithm's enhancements on its performance. 
As basic setting we selected the \texttt{mostInfeasible} branching strategy, which is a standard branching strategy for \BB algorithms. Later, a comparison of the computational performance of the different branching strategies is provided in Section~\ref{sec:branchAgg}. For now, we consider the following five settings including more and more enhancements for our algorithm:

\begin{itemize}[noitemsep]
    \item \texttt{I}: Algorithm~\ref{alg:epsMethod} incorporating Algorithm~\ref{alg:BnB} using the \texttt{mostInfeasible} branching strategy without any enhancements
    \item \texttt{IS}: Setting \texttt{I} with the sequential fixing as described in Section~\ref{sec:sequential}
    \item \texttt{ISR}: Setting \texttt{IS} with the reduction as described in Section~\ref{sec:reduction}
    \item \texttt{ISRT}: Setting \texttt{ISRT} with the tree reusing as described in Section~\ref{sec:tree}
    \item \texttt{ISRTP}: Setting \texttt{ISRTP} with the solution pooling as described in Section~\ref{sec:pooling}
\end{itemize}

\subsubsection{Aggregated results}\label{sec:ing-agg}
First, we present aggregated performance results of the different settings of our algorithm across all instances. Detailed computational results of the settings per instance are provided in Appendix~\ref{app:ingredients}.

\begin{figure}[htb]
    \centering
    \includegraphics[width=0.6\linewidth]{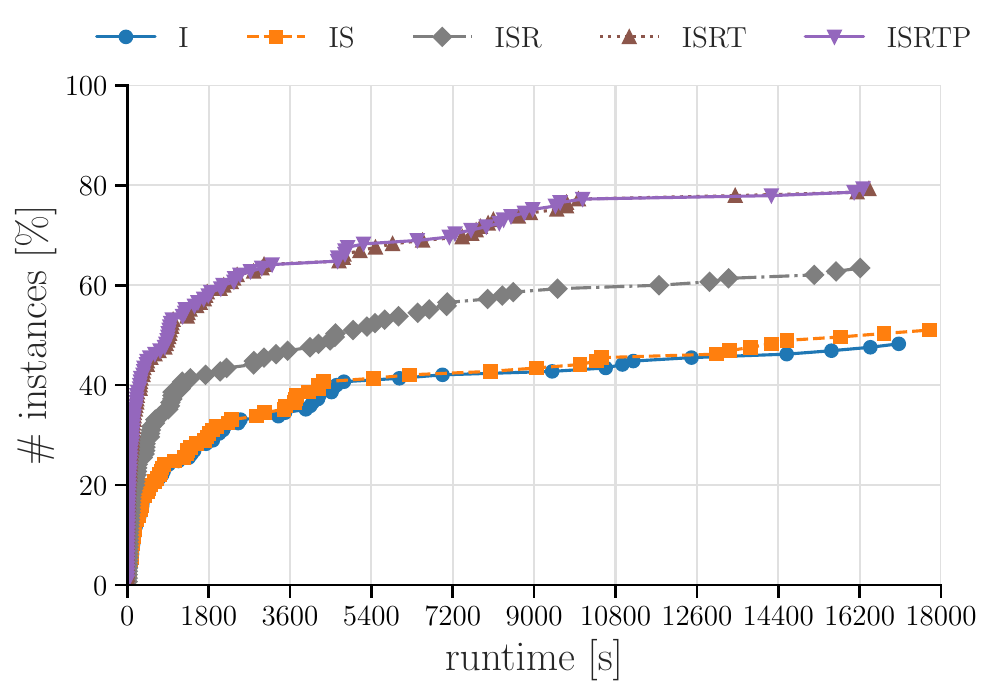}
    \caption{ECDFs of the runtimes for different settings of our algorithm over \texttt{combined} and \texttt{random} instances}
    \label{fig:features}
\end{figure}

In Figure~\ref{fig:features}, we show a plot of empirical cumulative distribution functions (ECDFs) with respect to the runtimes for the five settings over the \texttt{combined} and \texttt{random} instances accumulated. The figure shows that clearly the basic setting \texttt{I} performs worst, as we obtained the nondominated sets for only about 48\% of the instances. With the sequential fixing (setting \texttt{IS}), we observed only a minor improvement, obtaining the nondominated sets for about 51\% of the instances. However, combined with the reduction (setting \texttt{ISR}), for more than 63\% of the instances the nondominated sets could be obtained within the time limit. With our main contribution, the tree reusing (setting \texttt{ISRT}), the performance improved again significantly, and for nearly 80\% of all instances the nondominated sets were obtained. The solution pooling (setting \texttt{ISRTP}) reduced the runtime of the algorithm slightly.

\subsubsection{Results for one sample instance}\label{sec:ing-sample}

Next, we consider one exemplary instance, namely the instance R\_13\_70\_10\_2, in more detail to further analyze the impact of the enhancements.

\begin{figure}[h!tb]
    \begin{subfigure}[t]{0.48\textwidth}
        \centering
        \includegraphics[width=\textwidth]{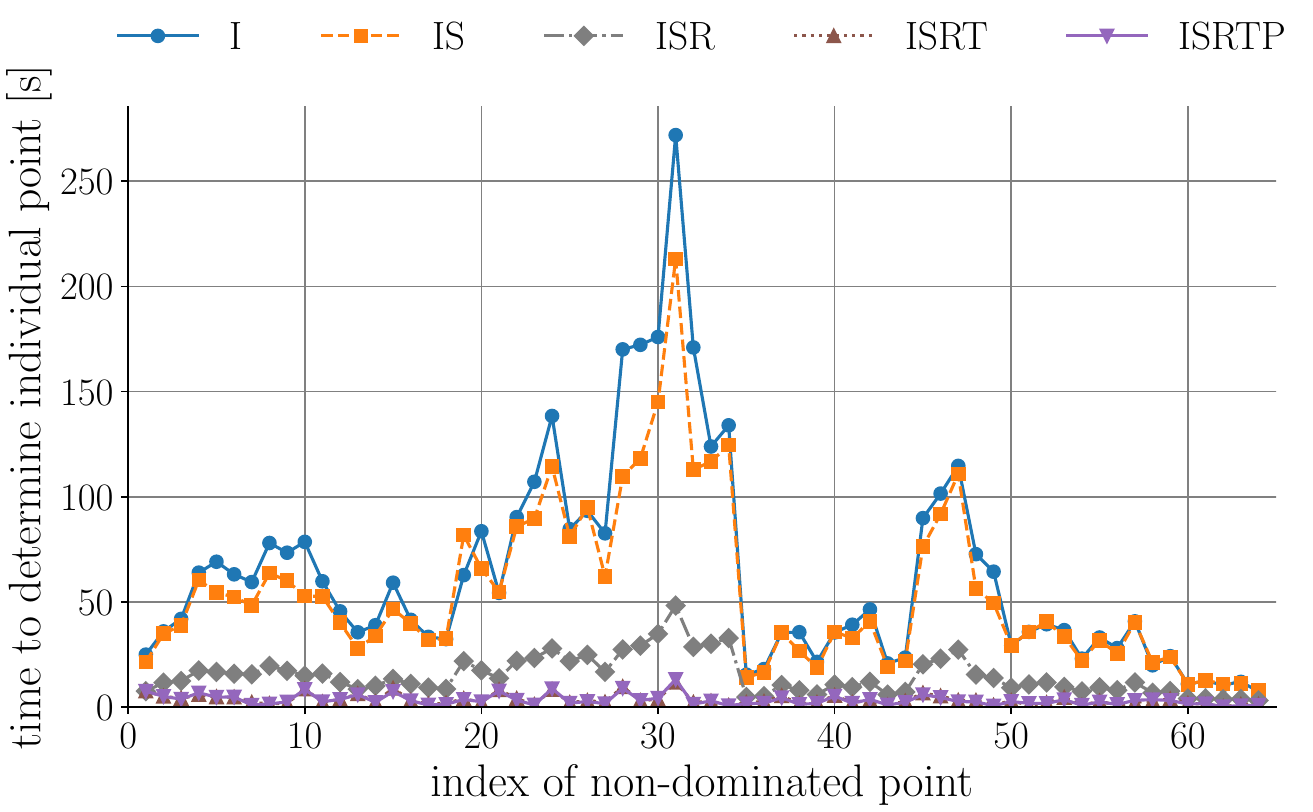}
        \caption{Time}
        \label{fig:features_instance_time}
    \end{subfigure}
    \begin{subfigure}[t]{0.48\textwidth}
        \centering
        \includegraphics[width=\textwidth]{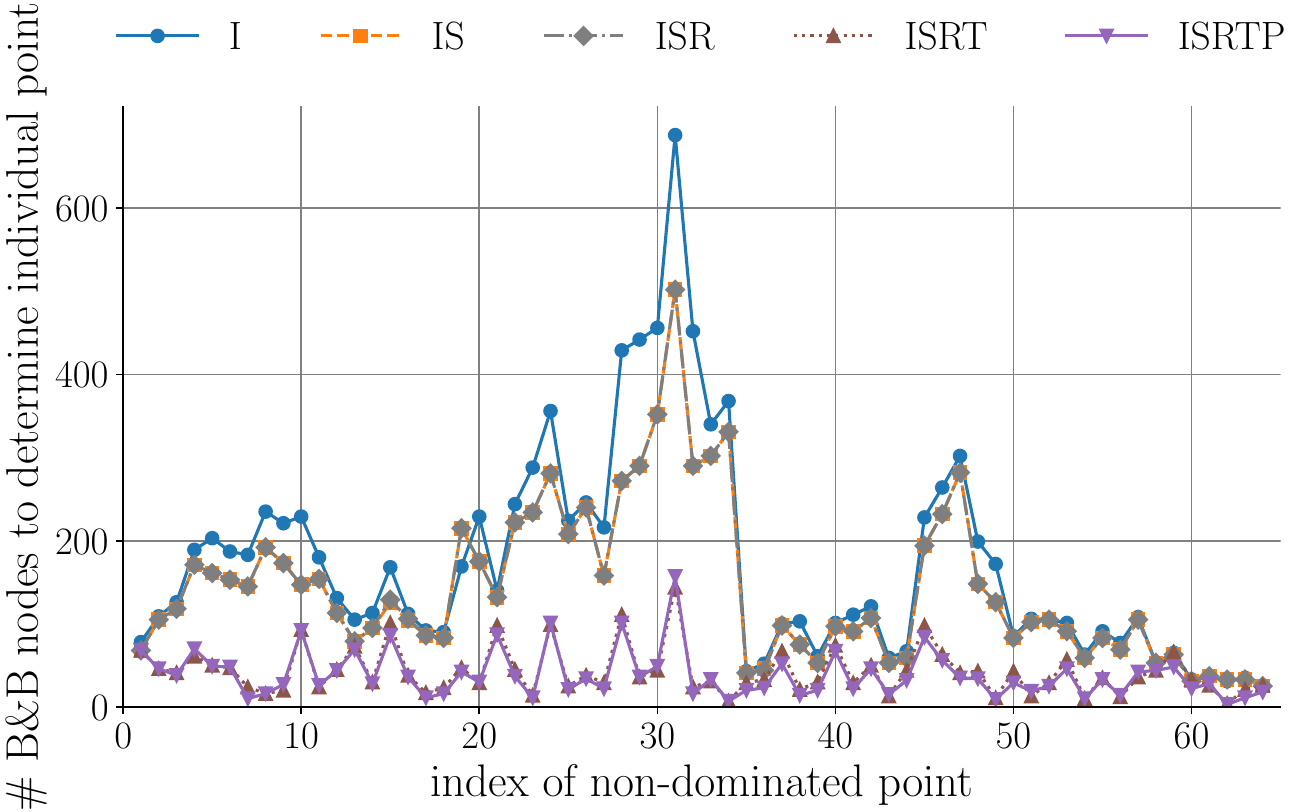}
        \caption{Number of \BB nodes}
        \label{fig:features_instance_nodes}
    \end{subfigure}
    \caption{Time (a) and number of \BB nodes (b) processed to determine the individual nondominated points for different settings of our algorithm for the instance R\_13\_70\_10\_2}
\end{figure}

Figure~\ref{fig:features_instance_time} shows the time it takes different settings of our algorithm to determine the individual nondominated points, i.e., the $x$-axes represents the indices of the order in which the nondominated points were found and the $y$-axes represents the time from finding the previous nondominated point to finding this nondominated point.
In this figure, we see that with the setting \texttt{IS} the times per nondominated point were reduced by up to a third compared to the basic setting \texttt{I}. Note that the sequential fixing can lead to different branching decisions and in rare cases increase the times. With the setting \texttt{ISR}, the times per nondominated point are significantly reduced, from up to 213 seconds per nondominated point using \texttt{IS}, to at most 50 seconds. Additionally, with the setting \texttt{ISRT}, the times per nondominated point are further reduced substantially, down to at most 12 seconds. With the setting \texttt{ISRTP}, the times could not be further reduced significantly, indicating that the primal heuristic finds a good feasible solution and thus also provides a good upper bound early on. 

In Figure~\ref{fig:features_instance_nodes}, we plot the number of \BB nodes processed by different settings of our algorithm to determine individual nondominated points for the instance R\_13\_70\_10\_2, i.e., the $x$-axes represents the indices of the order in which the nondominated points were found and the $y$-axes represents the number of \BB nodes that were processed between finding the previous nondominated point to finding this nondominated point.
This figure highlights that the enhancements of our algorithm influence its runtime in different ways. In particular, with the setting \texttt{IS} less \BB nodes were processed per nondominated point than with the basic setting \texttt{I}. However, using reduction in setting \texttt{ISR} did not decrease the number of \BB nodes processed per nondominated point further, indicating that the huge time differences between \texttt{IS} and \texttt{ISR} are due to the smaller SDP-relaxations being solved much faster within \texttt{ISR}. With the setting \texttt{ISRT}, the number of \BB nodes processed per nondominated point was further reduced significantly, from up to 500 \BB nodes per point to less than 100 for most points. With the setting \texttt{ISRTP} the number of \BB nodes processed per nondominated point could not be further decreased significantly.

\subsection{Investigation of the impact of the branching strategy}\label{sec:branchResult}

Next, we compare all our branching strategies and then examine the influence of all our enhancements on the best branching strategy determined.

\subsubsection{Aggregated results for all branching strategies}\label{sec:branchAgg}

The previous figures and tables have shown that each enhancement incrementally improved the runtime of our algorithm, thus we use all enhancements for the comparison of our branching strategies. In particular, besides the setting \texttt{ISRTP}, we consider the following settings:
\begin{itemize}[noitemsep]
    \item \texttt{LSRTP}: Setting \texttt{ISRTP} with the \texttt{lengthWeighted} branching strategy 
    \item \texttt{CSRTP}: Setting \texttt{ISRTP} with the \texttt{connected} branching strategy
    \item \texttt{FSRTP}: Setting \texttt{ISRTP} with the \texttt{facilityFocused} branching strategy
    \item \texttt{DSRTP}: Setting \texttt{ISRTP} with the \texttt{disconnected} branching strategy
    \item \texttt{NBRTP}: Setting \texttt{ISRTP} with the \texttt{nonBinary} branching strategy without the (as detailed in Section~\ref{sec:sequential}) redundant sequential fixing
\end{itemize}

\begin{figure}[h!tb]
    \centering
    \includegraphics[width=0.6\linewidth]{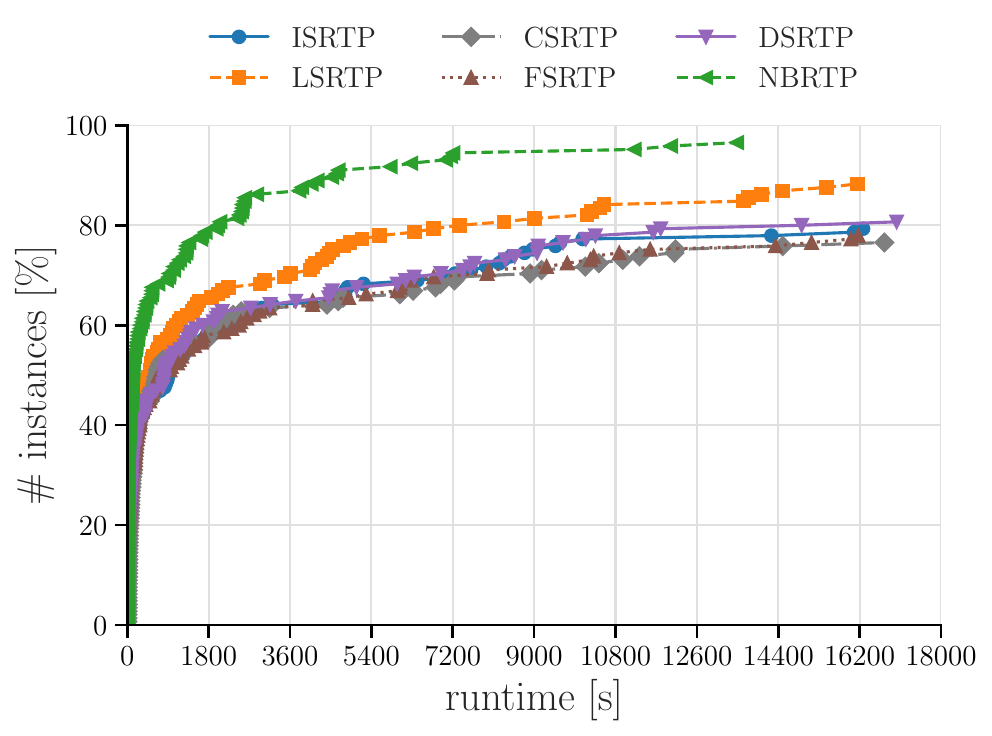}
    \caption{ECDFs of the runtimes for different branching strategies of our algorithm over \texttt{combined} and \texttt{random} instances}
    \label{fig:branching}
\end{figure}

In Figure~\ref{fig:branching}, we compare these six settings for our algorithm by showing ECDFs of the runtimes. The settings \texttt{CSRTP}, \texttt{FSRTP} and \texttt{DSRTP} performed similarly to \texttt{ISRTP}. The setting \texttt{LSRTP} using the \texttt{lengthWeighted} branching strategy was the best performing binary branching strategy, obtaining the nondominated sets for 89\% of the instances. However, the non-binary branching strategy was the overall best branching strategy and we obtained the nondominated sets for about 96\% of all instances with the setting \texttt{NBRTP}. Thus, our computational study showed that \texttt{lengthWeighted} is the best binary branching strategy, and \texttt{nonBinary} is the overall best branching strategy.

\subsubsection{Results for enhancements for the best branching strategy for one sample instance}\label{sec:branchSample}
In this section, we analyze the impact of the enhancements of our algorithm on our best branching strategy \texttt{nonBinary} in detail. To do so, we compared the following settings of our algorithm for the instance R\_13\_70\_10\_2: 
\begin{itemize}[noitemsep]
    \item \texttt{NB}: Algorithm~\ref{alg:epsMethod} incorporating Algorithm~\ref{alg:BnB} using the \texttt{nonBinary} branching strategy without any enhancements
    \item \texttt{NBR}: Setting \texttt{NB} with the reduction as described in Section~\ref{sec:reduction}
    \item \texttt{NBRT}: Setting \texttt{NBR} with the tree reusing as described in Section~\ref{sec:tree}
    \item \texttt{NBRTP}: Setting \texttt{NBRT} with the solution pooling as described in Section~\ref{sec:pooling}
\end{itemize}

\begin{figure}[h!tb]
    \centering
    \begin{subfigure}[t]{0.48\textwidth}
        \centering
        \includegraphics[width=\textwidth]{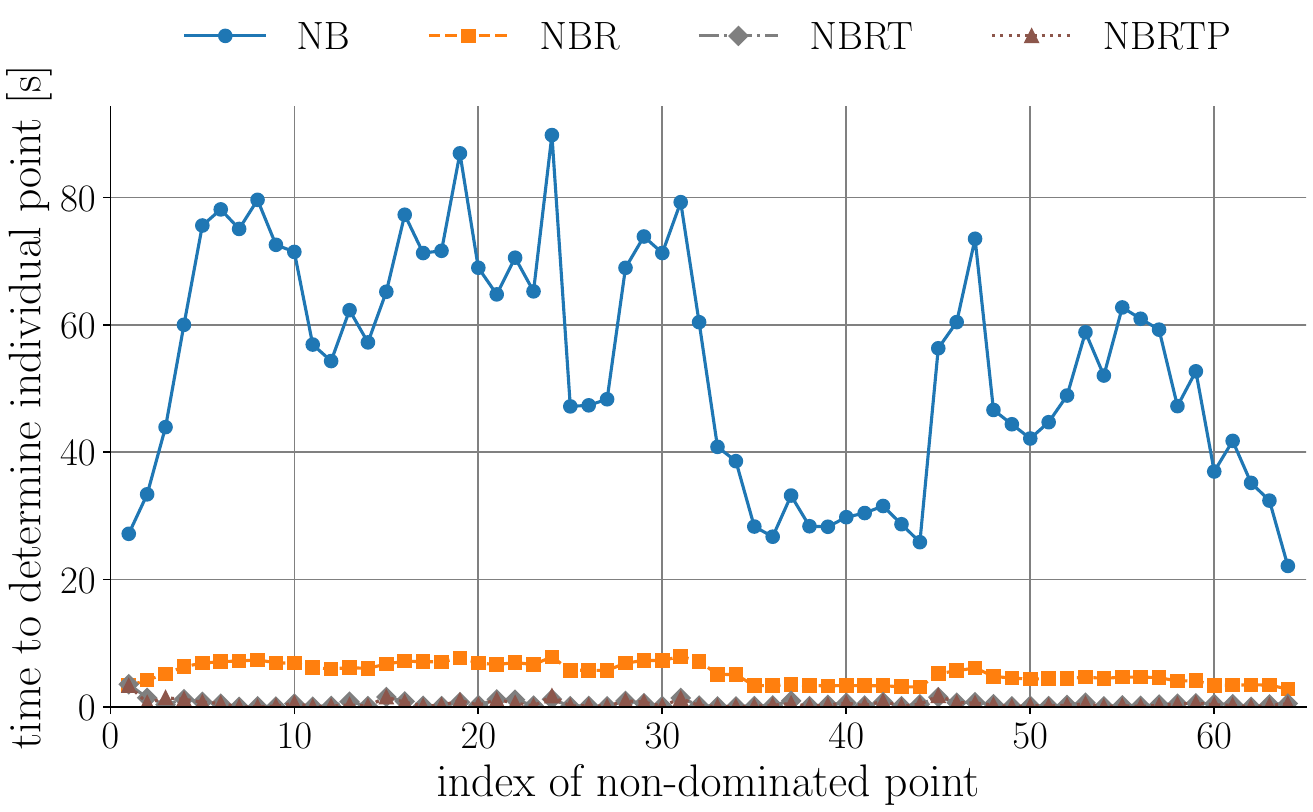}
        \caption{Time }
        \label{fig:features_instance_time_NB}
    \end{subfigure}
    \hfill 
    \begin{subfigure}[t]{0.48\textwidth}
        \centering
        \includegraphics[width=\textwidth]{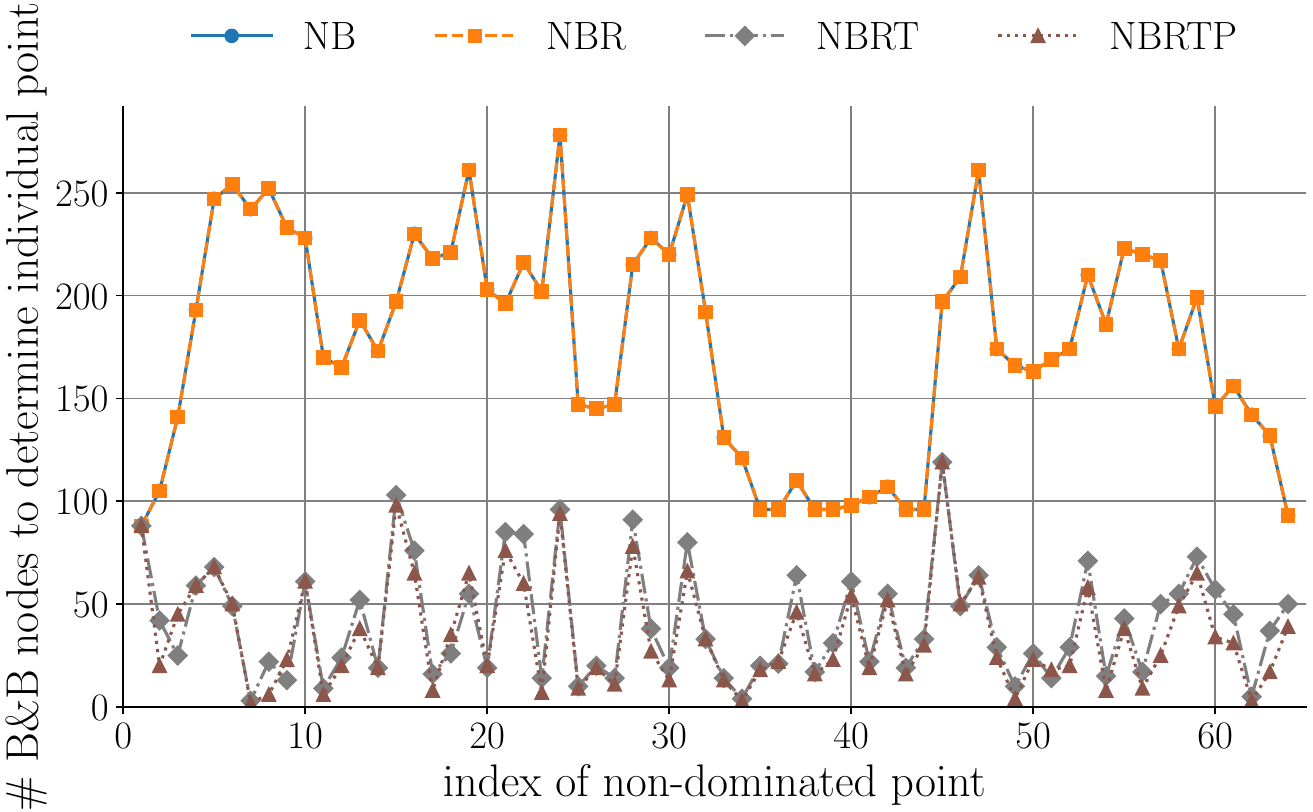}
        \caption{
        Number of \BB nodes}
        \label{fig:features_instance_nodes_NB}
    \end{subfigure}
    \caption{Time (a) and number of \BB nodes processed (b) to determine the individual nondominated points for different settings of our algorithm with \texttt{nonBinary} branching for the instance R\_13\_70\_10\_2}
\end{figure}

In Figure~\ref{fig:features_instance_time_NB}, created analogously to Figure~\ref{fig:features_instance_time}, we see that the times with the reduction (setting \texttt{NBR}), are shorter then the ones for \texttt{NB} by up to 90\% and most nondominated points were determined within less than 10 seconds. The setting \texttt{NBRT} with tree reusing determined most nondominated points within less than 1 second. The setting \texttt{NBRTP} did not improve the times significantly.

Figure~\ref{fig:features_instance_nodes_NB} was produced analogously to Figure~\ref{fig:features_instance_nodes} and shows that with the reduction (setting \texttt{NBR}), the same number of \BB nodes were processed per nondominated point as with the setting \texttt{NB}. Thus, both for binary and non-binary branching we observe that using the reduction makes solving the SDPs significantly faster. With the tree reusing (setting \texttt{NBRT}), the number of \BB nodes processed per nondominated point was significantly lower than with \texttt{NBR}. Finally, for some of the nondominated points, less \BB nodes were processed with solution pooling (setting \texttt{NBRTP}) than with \texttt{NBRT}.

\subsection{Comparison with off-the-shelf ILP solvers}\label{sec:solver-comp}
In this section, we compare the best setting of our algorithm with off-the-shelf ILP solvers. Towards this end, we use Algorithm~\ref{alg:epsMethod}, and within \texttt{solve$(P(\varepsilon))$} we employ the off-the-shelf ILP solvers CPLEX~22.1 and Gurobi~12 to solve~\eqref{eq:BOSRFLPe-IP} (opposed to using our \BB algorithm detailed in Algorithm~\ref{alg:BnB} to solve~$\myrefmath{P(\varepsilon)}$ as in our algorithm). We set the parameters \texttt{MIPGap} and \texttt{AbsMIPGap} for CPLEX and \texttt{MIPGap} and \texttt{MIPGapAbs} for Gurobi to zero and to $1-10^{-6}$, respectively. These parameter settings ensure that the ILP solvers only prune \BB nodes by bound if the absolute gap between lower and upper bound is smaller than~$1$. All other parameters were left at their default values. 

\begin{figure}[h!tb]
    \centering
    \includegraphics[width=0.6\linewidth]{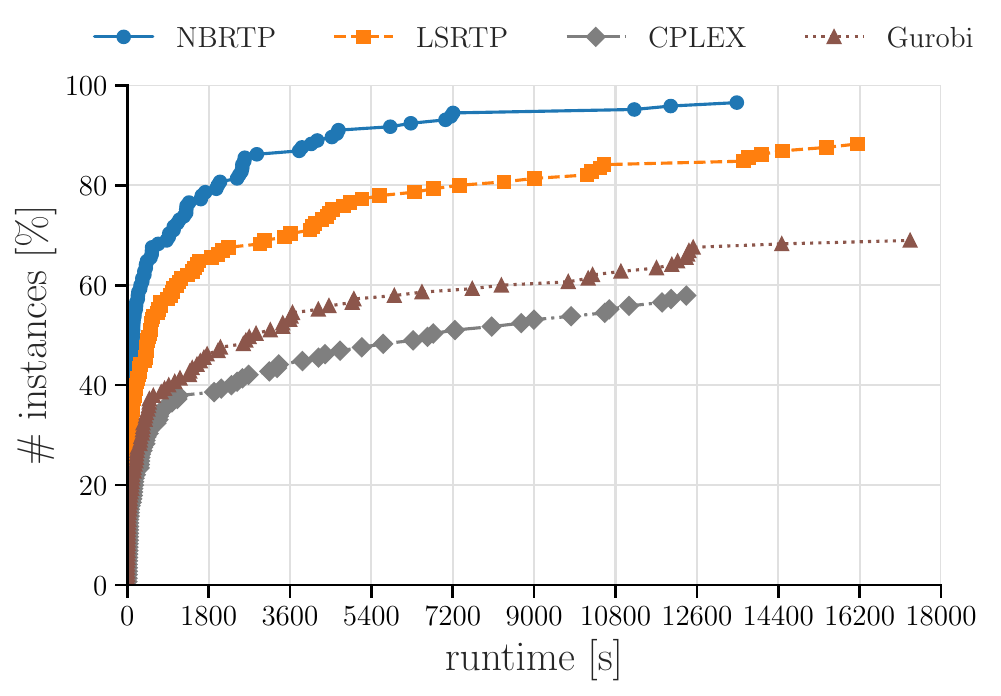}
    \caption{ECDFs of the runtimes of different settings of our algorithm and off-the-shelf ILP solvers over \texttt{combined} and \texttt{random} instances}
    \label{fig:IP_comp}
\end{figure}

In Figure~\ref{fig:IP_comp}, we compare our best binary branching setting \texttt{LSRTP} and our overall best setting, the non-binary branching setting \texttt{NBRTP} with the ILP-based approaches by showing ECDFs of the runtimes. Our algorithm outperformed both ILP-based approaches with both the binary and the non-binary branching strategy using all enhancements. In particular, the best ILP-based approach obtained the nondominated sets for only 69\% of the instances within the time limit, while our algorithm managed to obtain the nondominated sets for 96\% of all instances.

In Tables~\ref{tab:off_the_shelf_lit} and~\ref{tab:off_the_shelf_rand}, we report the runtime $t$ in seconds and the number of nondominated points~$|\mathcal{Y}_N|$ found at termination of the algorithm for the \texttt{combined} and \texttt{random} instances, respectively, for the settings \texttt{I}, \texttt{LSRTP} and \texttt{NBRTP} of our algorithm, as well as for the ILP-based approaches using the solvers CPLEX and Gurobi. For each instance, the fastest runtime is marked bold, and if no approach finished within the time limit, the highest number of nondominated points found is marked bold. Runs that terminated because of the time limit are marked with TL. 

The setting \texttt{NBRTP} outperformed both ILP-based approaches in every instance in terms of runtime. For the instance R\_12\_70\_10\_2 our algorithm with the setting \texttt{NBRTP} was faster than the basic setting~\texttt{I} by a factor of 180 and for the instance R\_16\_90\_20\_1 the setting \texttt{NBRTP} was faster than Gurobi by a factor of 90. The basic setting~\texttt{I} did not manage to obtain a single nondominated point for some instances, e.g., R\_19\_70\_10\_1, while our best setting obtained the nondominated set within less than an hour for this instance. Gurobi turned out to be the best ILP-based approach, obtaining at least one nondominated point for all instances. However, for the instance R\_20\_90\_20\_1, it managed to obtain only 3\% of the nondominated points, while our algorithm with the setting \texttt{NBRTP} obtained the nondominated set within less than an hour for this instance. As a result, the best settings with both binary and non-binary branching of our algorithm significantly outperformed all approaches based on off-the-shelf ILP solvers.

\input{tables/ILPComparison/ILP_comp_lit_table}
\input{tables/ILPComparison/ILP_comp_rand_table}

\section{Conclusions and outlook}\label{sec:conclusion}

In this work, we introduce a multi-objective version of the well-known single-row facility layout problem (\SRFLP) using Pareto optimality as a concept of optimality. 
We design a solution algorithm to solve the bi-objective \SRFLP based on the $\varepsilon$-constraint method. Our solution algorithm uses our own SDP-based branch-and-bound procedure for solving the individual iterations of the $\varepsilon$-constraint method, as SDP relaxations are known to be more effective for solving the \SRFLP compared to linear relaxations. This is in contrast to many existing works on the $\varepsilon$-constraint method, which use ILP solvers in a black-box fashion for solving the problems arising at individual iterations. This approach enables us to propose several enhancements for our $\varepsilon$-constraint approach, such as non-binary branching and reusing of nodes in the branch-and-bound trees, which are usually not possible to consider when using black-box solvers. 

The effectiveness of our solution algorithm is assessed in an extensive computational study with newly generated instances based on existing single-objective SRFLP instances and on randomly generated instances. 
This computational study shows that all our proposed enhancements incrementally improve the runtime of our solution algorithm. In particular, the tree reusing, where we initialize the \BB with a carefully selected subset of nodes of the \BB tree of the previous iteration, reduced the number of SDP relaxations required and hence the runtime significantly. Furthermore, the computational study showed that the non-binary branching strategy performed best in our solution algorithm. Our algorithm obtained the nondominated sets for 140 out of the 145 instances, while the best ILP-based $\varepsilon$-constraint method implementation using Gurobi to solve the single-objective ILP problems managed to do that only for 100 instances.

Several directions remain open for further work. One possible extension is to add valid inequalities to strengthen the SDP relaxations, as has been done for the single-objective \SRFLP in \citet{hungerlander_semidefinite_2013} and \citet{Schwiddessen2020}. In particular, the triangle inequalities are promising candidates. However, their exponential number requires the use or development of alternative SDP solvers, as the interior point method cannot handle large sets of constraints well. 
A further avenue is the generalization of the solution algorithm to more than two objectives, and in particular, to extend the tree reusing technique to such a setting. Moreover, the application of the tree reusing technique to other bi-objective problems could also be promising.

\ifArXiV
\bibliographystyle{plainnat}
\else
\bibliographystyle{elsarticle-harv}
\fi
\bibliography{SRFLP.bib}

\clearpage
\appendix

\section{Detailed results for the impact of our enhancements} \label{app:ingredients}
Tables~\ref{tab:features_lit} and~\ref{tab:features_rand} report the runtime $t$ in seconds and the number of nondominated points $|\mathcal{Y}_N|$ found at termination of the algorithm for the five settings introduced in Section~\ref{sec:ingredients} for the \texttt{combined} and \texttt{random} instances, respectively. The fastest runtime is marked in bold; if no approach finished within the time limit, the highest number of nondominated points found is marked in bold. 

We observe that the setting \texttt{I} performs worst and obtains only few nondominated points for instances with more than 16 facilities. The setting \texttt{IS} using the sequential fixing performs only slightly better than \texttt{I}. The setting \texttt{ISR} additionally using the reduction significantly reduced the runtimes. With the tree reusing in setting \texttt{ISRT} the runtime was again reduced significantly. With the setting \texttt{ISRTP}, using the solution pooling, the runtimes were reduced slightly for some instances and slightly worse for others. These mixed results for the solution pooling are caused by run-to-run variations, however, it shows the general trend that the primal heuristic as described in Section~\ref{sec:heur} is finding good feasible solutions early on and the solution pooling rarely provides better solutions.

\input{tables/features/features_lit_table}

\input{tables/features/features_rand_table}

\end{document}

%% file: tables/ILPComparison/ILP_comp_lit_table.tex
\begingroup\fontsize{9}{10}\selectfont
\begin{longtable}{lrrrrrrrrrr}
\caption{\label{tab:off_the_shelf_lit}Runtimes and numbers of nondominated points found for different settings of our algorithm and for off-the-shelf ILP solvers for \texttt{combined} instances}\\
\toprule
  & \multicolumn{2}{c}{\texttt{I}} & \multicolumn{2}{c}{\texttt{LSRTP}} & \multicolumn{2}{c}{\texttt{NBRTP}}  & \multicolumn{2}{c}{CPLEX} & \multicolumn{2}{c}{Gurobi} \\
  \cmidrule(lr){2-3}\cmidrule(lr){4-5}\cmidrule(lr){6-7}\cmidrule(lr){8-9}\cmidrule(lr){10-11}
\multicolumn{1}{c}{instance} &\multicolumn{1}{c}{$t \,[s]$} & \multicolumn{1}{c}{$|\mathcal{Y}_N|$} &\multicolumn{1}{c}{$t \,[s]$} & \multicolumn{1}{c}{$|\mathcal{Y}_N|$} & \multicolumn{1}{c}{$t \,[s]$} & \multicolumn{1}{c}{$|\mathcal{Y}_N|$} & \multicolumn{1}{c}{$t \,[s]$} & \multicolumn{1}{c}{$|\mathcal{Y}_N|$} & \multicolumn{1}{c}{$t \,[s]$} & \multicolumn{1}{c}{$|\mathcal{Y}_N|$} \\
\midrule
\endfirsthead
\caption[]{Runtimes and numbers of nondominated points found for different settings of our algorithm and for off-the-shelf ILP solvers for \texttt{combined} instances (continued)}\\
\toprule
  & \multicolumn{2}{c}{\texttt{I}} & \multicolumn{2}{c}{\texttt{LSRTP}} & \multicolumn{2}{c}{\texttt{NBRTP}}  & \multicolumn{2}{c}{CPLEX} & \multicolumn{2}{c}{Gurobi} \\
  \cmidrule(lr){2-3}\cmidrule(lr){4-5}\cmidrule(lr){6-7}\cmidrule(lr){8-9}\cmidrule(lr){10-11}
\multicolumn{1}{c}{instance} &\multicolumn{1}{c}{$t \,[s]$} & \multicolumn{1}{c}{$|\mathcal{Y}_N|$} &\multicolumn{1}{c}{$t \,[s]$} & \multicolumn{1}{c}{$|\mathcal{Y}_N|$} & \multicolumn{1}{c}{$t \,[s]$} & \multicolumn{1}{c}{$|\mathcal{Y}_N|$} & \multicolumn{1}{c}{$t \,[s]$} & \multicolumn{1}{c}{$|\mathcal{Y}_N|$} & \multicolumn{1}{c}{$t \,[s]$} & \multicolumn{1}{c}{$|\mathcal{Y}_N|$} \\
\midrule
\endhead

\midrule
\multicolumn{11}{r}{Continued on next page} \\
\midrule
\endfoot

\bottomrule
\endlastfoot
\input{tables/data/ILP_comp_lit}%
\end{longtable}
\endgroup{}

%% file: tables/data/ILP_comp_lit.tex
S9H\_SRFLP9 & 115 & 35 & 10 & 35 & \textbf{5} & 35 & 34 & 35 & 35 & 35 \\
S9\_S9H & 35 & 20 & 3 & 20 & \textbf{2} & 20 & 12 & 20 & 14 & 20 \\
S9H\_S9 & 27 & 16 & 4 & 16 & \textbf{2} & 16 & 8 & 16 & 6 & 16 \\
S9\_SRFLP9 & 64 & 21 & 8 & 21 & \textbf{4} & 21 & 17 & 21 & 18 & 21 \\
SRFLP9\_S9 & 37 & 10 & 4 & 10 & \textbf{2} & 10 & 5 & 10 & 9 & 10 \\
SRFLP9\_S9H & 49 & 18 & 4 & 18 & \textbf{2} & 18 & 10 & 18 & 11 & 18 \\
SRFLP10\_S10 & 106 & 21 & 8 & 21 & \textbf{5} & 21 & 25 & 21 & 23 & 21 \\
S10\_SRFLP10 & 147 & 18 & 10 & 18 & \textbf{7} & 18 & 38 & 18 & 21 & 18 \\
S11\_LW11 & 650 & 46 & 50 & 46 & \textbf{13} & 46 & 285 & 46 & 192 & 46 \\
LW11\_S11 & 1419 & 79 & 72 & 79 & \textbf{17} & 79 & 337 & 79 & 289 & 79 \\
H15\_SRFLP15 & 12481 & 62 & 511 & 62 & \textbf{111} & 62 & 3313 & 62 & 1767 & 62 \\
AM15\_SRFLP15 & 17067 & 89 & 1148 & 89 & \textbf{335} & 89 & 11099 & 89 & 4462 & 89 \\
SRFLP15\_AM15 & TL & 82 & 733 & 110 & \textbf{235} & 110 & 10663 & 110 & 5013 & 110 

%% file: tables/ILPComparison/ILP_comp_rand_table.tex
\begingroup\fontsize{9}{10}\selectfont
\begin{longtable}{lrrrrrrrrrr}
\caption{\label{tab:off_the_shelf_rand}Runtimes and numbers of nondominated points found for different settings of our algorithm and for off-the-shelf ILP solvers for \texttt{random} instances}\\
\toprule
  & \multicolumn{2}{c}{\texttt{I}} & \multicolumn{2}{c}{\texttt{LSRTP}} & \multicolumn{2}{c}{\texttt{NBRTP}}  & \multicolumn{2}{c}{CPLEX} & \multicolumn{2}{c}{Gurobi} \\
  \cmidrule(lr){2-3}\cmidrule(lr){4-5}\cmidrule(lr){6-7}\cmidrule(lr){8-9}\cmidrule(lr){10-11}
\multicolumn{1}{c}{instance} &\multicolumn{1}{c}{$t \,[s]$} & \multicolumn{1}{c}{$|\mathcal{Y}_N|$} &\multicolumn{1}{c}{$t \,[s]$} & \multicolumn{1}{c}{$|\mathcal{Y}_N|$} & \multicolumn{1}{c}{$t \,[s]$} & \multicolumn{1}{c}{$|\mathcal{Y}_N|$} & \multicolumn{1}{c}{$t \,[s]$} & \multicolumn{1}{c}{$|\mathcal{Y}_N|$} & \multicolumn{1}{c}{$t \,[s]$} & \multicolumn{1}{c}{$|\mathcal{Y}_N|$} \\

\midrule
\endfirsthead
\caption[]{Runtimes and numbers of nondominated points found for different settings of our algorithm and for off-the-shelf ILP solvers for \texttt{random} instances (continued)}\\
\toprule
  & \multicolumn{2}{c}{\texttt{I}} & \multicolumn{2}{c}{\texttt{LSRTP}} & \multicolumn{2}{c}{\texttt{NBRTP}}  & \multicolumn{2}{c}{CPLEX} & \multicolumn{2}{c}{Gurobi} \\
  \cmidrule(lr){2-3}\cmidrule(lr){4-5}\cmidrule(lr){6-7}\cmidrule(lr){8-9}\cmidrule(lr){10-11}
\multicolumn{1}{c}{instance} &\multicolumn{1}{c}{$t \,[s]$} & \multicolumn{1}{c}{$|\mathcal{Y}_N|$} &\multicolumn{1}{c}{$t \,[s]$} & \multicolumn{1}{c}{$|\mathcal{Y}_N|$} & \multicolumn{1}{c}{$t \,[s]$} & \multicolumn{1}{c}{$|\mathcal{Y}_N|$} & \multicolumn{1}{c}{$t \,[s]$} & \multicolumn{1}{c}{$|\mathcal{Y}_N|$} & \multicolumn{1}{c}{$t \,[s]$} & \multicolumn{1}{c}{$|\mathcal{Y}_N|$} \\
\midrule
\endhead
\midrule
\multicolumn{11}{r}{Continued on next page} \\
\midrule
\endfoot
\bottomrule
\endlastfoot
\input{tables/data/ILP_comp_rand}%
\end{longtable}
\endgroup{}

%% file: tables/data/ILP_comp_rand.tex
R\_10\_50\_10\_1 & 98 & 23 & 10 & 23 & \textbf{5} & 23 & 20 & 23 & 14 & 23 \\
R\_10\_50\_10\_2 & 337 & 41 & 18 & 41 & \textbf{11} & 41 & 53 & 41 & 39 & 41 \\
R\_10\_50\_20\_1 & 136 & 22 & 10 & 22 & \textbf{7} & 22 & 27 & 22 & 19 & 22 \\
R\_10\_50\_20\_2 & 187 & 27 & 14 & 27 & \textbf{7} & 27 & 40 & 27 & 26 & 27 \\
R\_10\_70\_10\_1 & 67 & 18 & 6 & 18 & \textbf{3} & 18 & 11 & 18 & 8 & 18 \\
R\_10\_70\_10\_2 & 73 & 17 & 7 & 17 & \textbf{5} & 17 & 17 & 17 & 11 & 17 \\
R\_10\_70\_20\_1 & 278 & 26 & 20 & 26 & \textbf{9} & 26 & 58 & 26 & 42 & 26 \\
R\_10\_70\_20\_2 & 316 & 33 & 13 & 33 & \textbf{5} & 33 & 37 & 33 & 29 & 33 \\
R\_10\_90\_10\_1 & 132 & 24 & 9 & 24 & \textbf{3} & 24 & 28 & 24 & 27 & 24 \\
R\_10\_90\_10\_2 & 345 & 38 & 28 & 38 & \textbf{11} & 38 & 99 & 38 & 68 & 38 \\
R\_10\_90\_20\_1 & 106 & 21 & 9 & 21 & \textbf{3} & 21 & 12 & 21 & 7 & 21 \\
R\_10\_90\_20\_2 & 153 & 13 & 14 & 13 & \textbf{6} & 13 & 35 & 13 & 22 & 13 \\
R\_11\_50\_10\_1 & 399 & 34 & 26 & 34 & \textbf{13} & 34 & 125 & 34 & 101 & 34 \\
R\_11\_50\_10\_2 & 488 & 52 & 27 & 52 & \textbf{15} & 52 & 162 & 52 & 121 & 52 \\
R\_11\_50\_20\_1 & 480 & 34 & 34 & 34 & \textbf{10} & 34 & 127 & 34 & 76 & 34 \\
R\_11\_50\_20\_2 & 1133 & 60 & 44 & 60 & \textbf{26} & 60 & 289 & 60 & 155 & 60 \\
R\_11\_70\_10\_1 & 918 & 51 & 32 & 51 & \textbf{15} & 51 & 149 & 51 & 107 & 51 \\
R\_11\_70\_10\_2 & 331 & 32 & 28 & 32 & \textbf{10} & 32 & 110 & 32 & 70 & 32 \\
R\_11\_70\_20\_1 & 706 & 35 & 60 & 35 & \textbf{10} & 35 & 133 & 35 & 84 & 35 \\
R\_11\_70\_20\_2 & 267 & 23 & 25 & 23 & \textbf{7} & 23 & 75 & 23 & 58 & 23 \\
R\_11\_90\_10\_1 & 208 & 26 & 14 & 26 & \textbf{7} & 26 & 55 & 26 & 53 & 26 \\
R\_11\_90\_10\_2 & 165 & 30 & 15 & 30 & \textbf{8} & 30 & 41 & 30 & 37 & 30 \\
R\_11\_90\_20\_1 & 535 & 65 & 42 & 65 & \textbf{10} & 65 & 151 & 65 & 120 & 65 \\
R\_11\_90\_20\_2 & 804 & 47 & 43 & 47 & \textbf{10} & 47 & 188 & 47 & 106 & 47 \\
R\_12\_50\_10\_1 & 4518 & 73 & 93 & 73 & \textbf{34} & 73 & 692 & 73 & 329 & 73 \\
R\_12\_50\_10\_2 & 1892 & 48 & 58 & 48 & \textbf{31} & 48 & 425 & 48 & 298 & 48 \\
R\_12\_50\_20\_1 & 4219 & 71 & 112 & 71 & \textbf{59} & 71 & 863 & 71 & 347 & 71 \\
R\_12\_50\_20\_2 & 3488 & 75 & 96 & 75 & \textbf{43} & 75 & 350 & 75 & 207 & 75 \\
R\_12\_70\_10\_1 & 829 & 35 & 51 & 35 & \textbf{18} & 35 & 274 & 35 & 203 & 35 \\
R\_12\_70\_10\_2 & 4792 & 86 & 98 & 86 & \textbf{26} & 86 & 988 & 86 & 485 & 86 \\
R\_12\_70\_20\_1 & 2153 & 61 & 91 & 61 & \textbf{19} & 61 & 549 & 61 & 337 & 61 \\
R\_12\_70\_20\_2 & 191 & 11 & 14 & 11 & \textbf{8} & 11 & 38 & 11 & 29 & 11 \\
R\_12\_90\_10\_1 & 2119 & 64 & 98 & 64 & \textbf{28} & 64 & 701 & 64 & 399 & 64 \\
R\_12\_90\_10\_2 & 1361 & 47 & 58 & 47 & \textbf{14} & 47 & 301 & 47 & 215 & 47 \\
R\_12\_90\_20\_1 & 2455 & 62 & 97 & 62 & \textbf{25} & 62 & 805 & 62 & 421 & 62 \\
R\_12\_90\_20\_2 & 1743 & 44 & 71 & 44 & \textbf{22} & 44 & 403 & 44 & 280 & 44 \\
R\_13\_50\_10\_1 & 2504 & 55 & 77 & 55 & \textbf{32} & 55 & 491 & 55 & 349 & 55 \\
R\_13\_50\_10\_2 & 11194 & 95 & 195 & 95 & \textbf{99} & 95 & 2549 & 95 & 1052 & 95 \\
R\_13\_50\_20\_1 & 4247 & 63 & 170 & 63 & \textbf{70} & 63 & 1922 & 63 & 826 & 63 \\
R\_13\_50\_20\_2 & 6020 & 66 & 195 & 66 & \textbf{55} & 66 & 2306 & 66 & 750 & 66 \\
R\_13\_70\_10\_1 & 2033 & 56 & 102 & 56 & \textbf{39} & 56 & 1108 & 56 & 473 & 56 \\
R\_13\_70\_10\_2 & 4050 & 64 & 132 & 64 & \textbf{32} & 64 & 1139 & 64 & 576 & 64 \\
R\_13\_70\_20\_1 & 1480 & 32 & 85 & 32 & \textbf{27} & 32 & 399 & 32 & 225 & 32 \\
R\_13\_70\_20\_2 & 1491 & 46 & 79 & 46 & \textbf{13} & 46 & 194 & 46 & 175 & 46 \\
R\_13\_90\_10\_1 & 772 & 26 & 53 & 26 & \textbf{28} & 26 & 279 & 26 & 150 & 26 \\
R\_13\_90\_10\_2 & 1913 & 65 & 111 & 65 & \textbf{21} & 65 & 471 & 65 & 405 & 65 \\
R\_13\_90\_20\_1 & 4629 & 94 & 233 & 94 & \textbf{53} & 94 & 2684 & 94 & 1167 & 94 \\
R\_13\_90\_20\_2 & 9399 & 94 & 414 & 94 & \textbf{122} & 94 & 3143 & 94 & 1376 & 94 \\
R\_14\_50\_10\_1 & 14587 & 82 & 500 & 82 & \textbf{232} & 82 & 6642 & 82 & 2700 & 82 \\
R\_14\_50\_10\_2 & 10952 & 81 & 272 & 81 & \textbf{102} & 81 & 4376 & 81 & 1543 & 81 \\
R\_14\_50\_20\_1 & 6974 & 74 & 259 & 74 & \textbf{69} & 74 & 3876 & 74 & 1384 & 74 \\
R\_14\_50\_20\_2 & TL & 84 & 433 & 119 & \textbf{112} & 119 & 8716 & 119 & 3165 & 119 \\
R\_14\_70\_10\_1 & TL & 74 & 406 & 117 & \textbf{109} & 117 & 6324 & 117 & 2568 & 117 \\
R\_14\_70\_10\_2 & TL & 102 & 473 & 108 & \textbf{188} & 108 & 5189 & 108 & 2624 & 108 \\
R\_14\_70\_20\_1 & TL & 87 & 521 & 111 & \textbf{194} & 111 & 9819 & 111 & 3438 & 111 \\
R\_14\_70\_20\_2 & TL & 71 & 427 & 103 & \textbf{61} & 103 & 4230 & 103 & 2008 & 103 \\
R\_14\_90\_10\_1 & 3950 & 47 & 212 & 47 & \textbf{56} & 47 & 2080 & 47 & 918 & 47 \\
R\_14\_90\_10\_2 & 3343 & 47 & 159 & 47 & \textbf{20} & 47 & 730 & 47 & 489 & 47 \\
R\_14\_90\_20\_1 & 4549 & 63 & 268 & 63 & \textbf{71} & 63 & 2431 & 63 & 1437 & 63 \\
R\_14\_90\_20\_2 & 4104 & 32 & 125 & 32 & \textbf{36} & 32 & 646 & 32 & 455 & 32 \\
R\_15\_50\_10\_1 & TL & 35 & 1093 & 157 & \textbf{550} & 157 & TL & 113 & 7632 & 157 \\
R\_15\_50\_10\_2 & TL & 60 & 442 & 91 & \textbf{136} & 91 & 7249 & 91 & 3440 & 91 \\
R\_15\_50\_20\_1 & TL & 41 & 1488 & 131 & \textbf{379} & 131 & 8997 & 131 & 3596 & 131 \\
R\_15\_50\_20\_2 & TL & 58 & 1190 & 107 & \textbf{378} & 107 & TL & 102 & 6519 & 107 \\
R\_15\_70\_10\_1 & TL & 36 & 675 & 77 & \textbf{246} & 77 & 10563 & 77 & 4227 & 77 \\
R\_15\_70\_10\_2 & TL & 42 & 523 & 73 & \textbf{111} & 73 & 8062 & 73 & 3623 & 73 \\
R\_15\_70\_20\_1 & 15577 & 84 & 431 & 84 & \textbf{131} & 84 & 6768 & 84 & 2850 & 84 \\
R\_15\_70\_20\_2 & TL & 77 & 738 & 98 & \textbf{201} & 98 & 5661 & 98 & 2062 & 98 \\
R\_15\_90\_10\_1 & TL & 59 & 696 & 112 & \textbf{160} & 112 & 12367 & 112 & 5908 & 112 \\
R\_15\_90\_10\_2 & 10583 & 66 & 299 & 66 & \textbf{63} & 66 & 3351 & 66 & 1697 & 66 \\
R\_15\_90\_20\_1 & TL & 35 & 545 & 53 & \textbf{109} & 53 & 11831 & 53 & 3655 & 53 \\
R\_15\_90\_20\_2 & TL & 42 & 392 & 42 & \textbf{124} & 42 & 4708 & 42 & 1617 & 42 \\
R\_16\_50\_10\_1 & 16437 & 92 & 997 & 92 & \textbf{162} & 92 & 12029 & 92 & 4977 & 92 \\
R\_16\_50\_10\_2 & TL & 13 & 1863 & 123 & \textbf{514} & 123 & TL & 91 & 10292 & 123 \\
R\_16\_50\_20\_1 & TL & 45 & 5195 & 219 & \textbf{2052} & 219 & TL & 73 & TL & 162 \\
R\_16\_50\_20\_2 & TL & 17 & 4149 & 183 & \textbf{2004} & 183 & TL & 42 & TL & 115 \\
R\_16\_70\_10\_1 & TL & 37 & 2004 & 116 & \textbf{332} & 116 & TL & 56 & 12363 & 116 \\
R\_16\_70\_10\_2 & TL & 35 & 1336 & 87 & \textbf{410} & 87 & TL & 66 & 10195 & 87 \\
R\_16\_70\_20\_1 & TL & 28 & 2113 & 155 & \textbf{869} & 155 & TL & 77 & 14478 & 155 \\
R\_16\_70\_20\_2 & TL & 54 & 954 & 91 & \textbf{415} & 91 & TL & 70 & 8277 & 91 \\
R\_16\_90\_10\_1 & TL & 34 & 568 & 79 & \textbf{115} & 79 & TL & 59 & 9755 & 79 \\
R\_16\_90\_10\_2 & TL & 52 & 884 & 107 & \textbf{142} & 107 & TL & 65 & 12424 & 107 \\
R\_16\_90\_20\_1 & TL & 52 & 1012 & 109 & \textbf{130} & 109 & TL & 68 & 11708 & 109 \\
R\_16\_90\_20\_2 & TL & 25 & 1541 & 126 & \textbf{295} & 126 & TL & 52 & TL & 125 \\
R\_17\_50\_10\_1 & TL & 18 & 4465 & 190 & \textbf{2477} & 190 & TL & 33 & TL & 109 \\
R\_17\_50\_10\_2 & TL & 9 & 2244 & 134 & \textbf{1627} & 134 & TL & 36 & 17318 & 134 \\
R\_17\_50\_20\_1 & TL & 2 & 4103 & 162 & \textbf{2588} & 162 & TL & 69 & 12518 & 162 \\
R\_17\_50\_20\_2 & TL & 28 & 3479 & 162 & \textbf{547} & 162 & TL & 60 & TL & 126 \\
R\_17\_70\_10\_1 & TL & 10 & 4790 & 151 & \textbf{1368} & 151 & TL & 37 & TL & 121 \\
R\_17\_70\_10\_2 & TL & 11 & 3033 & 218 & \textbf{919} & 218 & TL & 54 & TL & 113 \\
R\_17\_70\_20\_1 & TL & 5 & 4540 & 131 & \textbf{929} & 131 & TL & 51 & 12397 & 131 \\
R\_17\_70\_20\_2 & TL & 17 & 10546 & 207 & \textbf{3803} & 207 & TL & 31 & TL & 72 \\
R\_17\_90\_10\_1 & TL & 9 & 1441 & 105 & \textbf{438} & 105 & TL & 21 & TL & 42 \\
R\_17\_90\_10\_2 & TL & 18 & 2938 & 146 & \textbf{541} & 146 & TL & 38 & 10918 & 146 \\
R\_17\_90\_20\_1 & TL & 13 & 3618 & 122 & \textbf{684} & 122 & TL & 42 & 12045 & 122 \\
R\_17\_90\_20\_2 & TL & 18 & 1591 & 85 & \textbf{286} & 85 & TL & 25 & TL & 67 \\
R\_18\_50\_10\_1 & TL & 18 & 6770 & 175 & \textbf{2600} & 175 & TL & 17 & TL & 82 \\
R\_18\_50\_10\_2 & TL & 13 & 16156 & 145 & \textbf{2867} & 145 & TL & 17 & TL & 42 \\
R\_18\_50\_20\_1 & TL & 16 & 4417 & 135 & \textbf{1312} & 135 & TL & 28 & TL & 123 \\
R\_18\_50\_20\_2 & TL & 10 & TL & 242 & \textbf{12022} & 247 & TL & 36 & TL & 61 \\
R\_18\_70\_10\_1 & TL & 5 & 8332 & 179 & \textbf{1969} & 179 & TL & 34 & TL & 71 \\
R\_18\_70\_10\_2 & TL & 7 & 4041 & 176 & \textbf{1251} & 176 & TL & 38 & TL & 120 \\
R\_18\_70\_20\_1 & TL & 5 & 13635 & 245 & \textbf{4634} & 245 & TL & 23 & TL & 52 \\
R\_18\_70\_20\_2 & TL & 6 & 4306 & 119 & \textbf{1030} & 119 & TL & 31 & 12174 & 119 \\
R\_18\_90\_10\_1 & TL & 12 & 6348 & 223 & \textbf{1024} & 223 & TL & 25 & TL & 46 \\
R\_18\_90\_10\_2 & TL & 15 & 5578 & 154 & \textbf{1108} & 154 & TL & 24 & TL & 74 \\
R\_18\_90\_20\_1 & TL & 11 & 9006 & 144 & \textbf{1305} & 144 & TL & 26 & TL & 71 \\
R\_18\_90\_20\_2 & TL & 5 & 10455 & 255 & \textbf{2527} & 255 & TL & 14 & TL & 35 \\
R\_19\_50\_10\_1 & TL & 7 & TL & 196 & \textbf{7208} & 296 & TL & 21 & TL & 60 \\
R\_19\_50\_10\_2 & TL & 4 & TL & 167 & TL & \textbf{255} & TL & 20 & TL & 68 \\
R\_19\_50\_20\_1 & TL & 0 & TL & 313 & \textbf{4527} & 423 & TL & 12 & TL & 31 \\
R\_19\_50\_20\_2 & TL & 18 & 14027 & 268 & \textbf{1725} & 268 & TL & 31 & TL & 143 \\
R\_19\_70\_10\_1 & TL & 0 & 10170 & 246 & \textbf{2547} & 246 & TL & 16 & TL & 121 \\
R\_19\_70\_10\_2 & TL & 4 & TL & 248 & \textbf{3865} & 282 & TL & 24 & TL & 68 \\
R\_19\_70\_20\_1 & TL & 1 & TL & 113 & TL & \textbf{278} & TL & 15 & TL & 55 \\
R\_19\_70\_20\_2 & TL & 4 & TL & 284 & \textbf{4672} & 354 & TL & 27 & TL & 71 \\
R\_19\_90\_10\_1 & TL & 1 & 4927 & 125 & \textbf{1304} & 125 & TL & 5 & TL & 8 \\
R\_19\_90\_10\_2 & TL & 2 & 14494 & 260 & \textbf{4070} & 260 & TL & 10 & TL & 23 \\
R\_19\_90\_20\_1 & TL & 18 & 13743 & 206 & \textbf{2430} & 206 & TL & 10 & TL & 40 \\
R\_19\_90\_20\_2 & TL & 17 & 7349 & 142 & \textbf{1649} & 142 & TL & 13 & TL & 29 \\
R\_20\_50\_10\_1 & TL & 1 & TL & 107 & \textbf{11216} & 291 & TL & 12 & TL & 38 \\
R\_20\_50\_10\_2 & TL & 4 & TL & 45 & TL & \textbf{226} & TL & 10 & TL & 36 \\
R\_20\_50\_20\_1 & TL & 0 & TL & 54 & TL & \textbf{388} & TL & 17 & TL & 39 \\
R\_20\_50\_20\_2 & TL & 4 & TL & 109 & \textbf{5818} & 284 & TL & 11 & TL & 36 \\
R\_20\_70\_10\_1 & TL & 0 & TL & 134 & \textbf{13484} & 255 & TL & 29 & TL & 93 \\
R\_20\_70\_10\_2 & TL & 0 & TL & 103 & \textbf{7157} & 233 & TL & 7 & TL & 17 \\
R\_20\_70\_20\_1 & TL & 1 & TL & 113 & \textbf{7040} & 317 & TL & 4 & TL & 16 \\
R\_20\_70\_20\_2 & TL & 0 & TL & 124 & TL & \textbf{277} & TL & 7 & TL & 16 \\
R\_20\_90\_10\_1 & TL & 8 & TL & 131 & \textbf{6273} & 238 & TL & 8 & TL & 28 \\
R\_20\_90\_10\_2 & TL & 6 & 10263 & 151 & \textbf{1156} & 151 & TL & 6 & TL & 13 \\
R\_20\_90\_20\_1 & TL & 2 & 15465 & 210 & \textbf{2544} & 210 & TL & 3 & TL & 6 \\
R\_20\_90\_20\_2 & TL & 0 & TL & 176 & \textbf{4202} & 332 & TL & 9 & TL & 36 

%% file: tables/features/features_lit_table.tex
\begingroup\fontsize{9}{10}\selectfont
\begin{longtable}{lrrrrrrrrrr}
\caption{\label{tab:features_lit} Runtimes and numbers of nondominated points found for different settings of our algorithm for \texttt{combined} instances}\\
\toprule
& \multicolumn{2}{c}{\texttt{I}} & \multicolumn{2}{c}{\texttt{IS}} & \multicolumn{2}{c}{\texttt{ISR}} & \multicolumn{2}{c}{\texttt{ISRT}} &  \multicolumn{2}{c}{\texttt{ISRTP}} \\
\cmidrule(lr){2-3}\cmidrule(lr){4-5}\cmidrule(lr){6-7}\cmidrule(lr){8-9}\cmidrule(lr){10-11}
\multicolumn{1}{c}{instance} & \multicolumn{1}{c}{$t \,[s]$} & \multicolumn{1}{c}{$|\mathcal{Y}_N|$} & \multicolumn{1}{c}{$t \,[s]$} & \multicolumn{1}{c}{$|\mathcal{Y}_N|$} & \multicolumn{1}{c}{$t \,[s]$} & \multicolumn{1}{c}{$|\mathcal{Y}_N|$} & \multicolumn{1}{c}{$t \,[s]$} & \multicolumn{1}{c}{$|\mathcal{Y}_N|$}& \multicolumn{1}{c}{$t \,[s]$} & \multicolumn{1}{c}{$|\mathcal{Y}_N|$}\\ 
\midrule
\endfirsthead
\caption[]{Runtimes and numbers of nondominated points found for different settings of our algorithm for \texttt{combined} instances (continued)}\\
\toprule
& \multicolumn{2}{c}{\texttt{I}} & \multicolumn{2}{c}{\texttt{IS}} & \multicolumn{2}{c}{\texttt{ISR}} & \multicolumn{2}{c}{\texttt{ISRT}} &  \multicolumn{2}{c}{\texttt{ISRTP}} \\
\cmidrule(lr){2-3}\cmidrule(lr){4-5}\cmidrule(lr){6-7}\cmidrule(lr){8-9}\cmidrule(lr){10-11}
\multicolumn{1}{c}{instance} & \multicolumn{1}{c}{$t \,[s]$} & \multicolumn{1}{c}{$|\mathcal{Y}_N|$} & \multicolumn{1}{c}{$t \,[s]$} & \multicolumn{1}{c}{$|\mathcal{Y}_N|$} & \multicolumn{1}{c}{$t \,[s]$} & \multicolumn{1}{c}{$|\mathcal{Y}_N|$} & \multicolumn{1}{c}{$t \,[s]$} & \multicolumn{1}{c}{$|\mathcal{Y}_N|$}& \multicolumn{1}{c}{$t \,[s]$} & \multicolumn{1}{c}{$|\mathcal{Y}_N|$} \\ 
\midrule
\endhead
\midrule
\multicolumn{11}{r}{Continued on next page} \\
\midrule
\endfoot
\bottomrule
\endlastfoot
\input{tables/data/features_lit}%
\end{longtable}
\endgroup{}

%% file: tables/data/features_lit.tex
S9H\_SRFLP9 & 115 & 35 & 109 & 35 & 33 & 35 & 12 & 35 & \textbf{11} & 35 \\
S9\_S9H & 35 & 20 & 34 & 20 & 10 & 20 & \textbf{4} & 20 & \textbf{4} & 20 \\
S9H\_S9 & 27 & 16 & 24 & 16 & 8 & 16 & \textbf{4} & 16 & \textbf{4} & 16 \\
S9\_SRFLP9 & 64 & 21 & 63 & 21 & 17 & 21 & \textbf{8} & 21 & \textbf{8} & 21 \\
SRFLP9\_S9 & 37 & 10 & 37 & 10 & 10 & 10 & \textbf{5} & 10 & \textbf{5} & 10 \\
SRFLP9\_S9H & 49 & 18 & 48 & 18 & 14 & 18 & \textbf{5} & 18 & \textbf{5} & 18 \\
SRFLP10\_S10 & 106 & 21 & 103 & 21 & 28 & 21 & \textbf{9} & 21 & \textbf{9} & 21 \\
S10\_SRFLP10 & 147 & 18 & 134 & 18 & 28 & 18 & \textbf{11} & 18 & 12 & 18 \\
S11\_LW11 & 650 & 46 & 608 & 46 & 172 & 46 & 44 & 46 & \textbf{43} & 46 \\
LW11\_S11 & 1419 & 79 & 1324 & 79 & 384 & 79 & \textbf{85} & 79 & \textbf{85} & 79 \\
H15\_SRFLP15 & 12481 & 62 & 10381 & 62 & 3025 & 62 & 890 & 62 & \textbf{884} & 62 \\
AM15\_SRFLP15 & 17067 & 89 & 14248 & 89 & 4489 & 89 & 1612 & 89 & \textbf{1564} & 89 \\
SRFLP15\_AM15 & TL & 82 & TL & 85 & 6424 & 110 & 1536 & 110 & \textbf{1486} & 110 

%% file: tables/features/features_rand_table.tex
\begingroup\fontsize{9}{10}\selectfont
\begin{longtable}{lrrrrrrrrrr}
\caption{\label{tab:features_rand} Runtimes and numbers of nondominated points found for different settings of our algorithm for \texttt{random} instances}\\
\toprule
& \multicolumn{2}{c}{\texttt{I}} & \multicolumn{2}{c}{\texttt{IS}} & \multicolumn{2}{c}{\texttt{ISR}} & \multicolumn{2}{c}{\texttt{ISRT}} &  \multicolumn{2}{c}{\texttt{ISRTP}} \\
\cmidrule(lr){2-3}\cmidrule(lr){4-5}\cmidrule(lr){6-7}\cmidrule(lr){8-9}\cmidrule(lr){10-11}
\multicolumn{1}{c}{instance} & \multicolumn{1}{c}{$t \,[s]$} & \multicolumn{1}{c}{$|\mathcal{Y}_N|$} & \multicolumn{1}{c}{$t \,[s]$} & \multicolumn{1}{c}{$|\mathcal{Y}_N|$} & \multicolumn{1}{c}{$t \,[s]$} & \multicolumn{1}{c}{$|\mathcal{Y}_N|$} & \multicolumn{1}{c}{$t \,[s]$} & \multicolumn{1}{c}{$|\mathcal{Y}_N|$}& \multicolumn{1}{c}{$t \,[s]$} & \multicolumn{1}{c}{$|\mathcal{Y}_N|$}\\ 
\midrule
\endfirsthead
\caption{Runtimes and numbers of nondominated points found for different settings of our algorithm for \texttt{random} instances (continued)}\\
\toprule
 & \multicolumn{2}{c}{\texttt{I}} & \multicolumn{2}{c}{\texttt{IS}} & \multicolumn{2}{c}{\texttt{ISR}} & \multicolumn{2}{c}{\texttt{ISRT}} &  \multicolumn{2}{c}{\texttt{ISRTP}} \\
 \cmidrule(lr){2-3}\cmidrule(lr){4-5}\cmidrule(lr){6-7}\cmidrule(lr){8-9}\cmidrule(lr){10-11}
\multicolumn{1}{c}{instance} & \multicolumn{1}{c}{$t \,[s]$} & \multicolumn{1}{c}{$|\mathcal{Y}_N|$} & \multicolumn{1}{c}{$t \,[s]$} & \multicolumn{1}{c}{$|\mathcal{Y}_N|$} & \multicolumn{1}{c}{$t \,[s]$} & \multicolumn{1}{c}{$|\mathcal{Y}_N|$} & \multicolumn{1}{c}{$t \,[s]$} & \multicolumn{1}{c}{$|\mathcal{Y}_N|$}& \multicolumn{1}{c}{$t \,[s]$} & \multicolumn{1}{c}{$|\mathcal{Y}_N|$} \\ 
\midrule
\endhead
\midrule
\multicolumn{11}{r}{Continued on next page} \\
\midrule
\endfoot
\bottomrule
\endlastfoot
\input{tables/data/features_rand}%
\end{longtable}
\endgroup{}

%% file: tables/data/features_rand.tex
R\_10\_50\_10\_1 & 98 & 23 & 96 & 23 & 27 & 23 & 10 & 23 & \textbf{9} & 23 \\
R\_10\_50\_10\_2 & 337 & 41 & 320 & 41 & 81 & 41 & \textbf{21} & 41 & \textbf{21} & 41 \\
R\_10\_50\_20\_1 & 136 & 22 & 130 & 22 & 34 & 22 & \textbf{9} & 22 & \textbf{9} & 22 \\
R\_10\_50\_20\_2 & 187 & 27 & 173 & 27 & 46 & 27 & 18 & 27 & \textbf{17} & 27 \\
R\_10\_70\_10\_1 & 67 & 18 & 64 & 18 & 19 & 18 & \textbf{9} & 18 & \textbf{9} & 18 \\
R\_10\_70\_10\_2 & 73 & 17 & 68 & 17 & 20 & 17 & \textbf{8} & 17 & \textbf{8} & 17 \\
R\_10\_70\_20\_1 & 278 & 26 & 249 & 26 & 62 & 26 & 24 & 26 & \textbf{23} & 26 \\
R\_10\_70\_20\_2 & 316 & 33 & 299 & 33 & 78 & 33 & 23 & 33 & \textbf{22} & 33 \\
R\_10\_90\_10\_1 & 132 & 24 & 132 & 24 & 38 & 24 & 14 & 24 & \textbf{13} & 24 \\
R\_10\_90\_10\_2 & 345 & 38 & 318 & 38 & 87 & 38 & \textbf{28} & 38 & \textbf{28} & 38 \\
R\_10\_90\_20\_1 & 106 & 21 & 103 & 21 & 31 & 21 & 10 & 21 & \textbf{9} & 21 \\
R\_10\_90\_20\_2 & 153 & 13 & 146 & 13 & 39 & 13 & \textbf{16} & 13 & 18 & 13 \\
R\_11\_50\_10\_1 & 399 & 34 & 382 & 34 & 94 & 34 & \textbf{31} & 34 & \textbf{31} & 34 \\
R\_11\_50\_10\_2 & 488 & 52 & 471 & 52 & 136 & 52 & 33 & 52 & \textbf{32} & 52 \\
R\_11\_50\_20\_1 & 480 & 34 & 463 & 34 & 118 & 34 & \textbf{43} & 34 & \textbf{43} & 34 \\
R\_11\_50\_20\_2 & 1133 & 60 & 1044 & 60 & 252 & 60 & \textbf{59} & 60 & 61 & 60 \\
R\_11\_70\_10\_1 & 918 & 51 & 810 & 51 & 218 & 51 & \textbf{43} & 51 & \textbf{43} & 51 \\
R\_11\_70\_10\_2 & 331 & 32 & 313 & 32 & 91 & 32 & 33 & 32 & \textbf{32} & 32 \\
R\_11\_70\_20\_1 & 706 & 35 & 655 & 35 & 176 & 35 & \textbf{58} & 35 & 61 & 35 \\
R\_11\_70\_20\_2 & 267 & 23 & 255 & 23 & 76 & 23 & \textbf{34} & 23 & 35 & 23 \\
R\_11\_90\_10\_1 & 208 & 26 & 209 & 26 & 64 & 26 & 25 & 26 & \textbf{24} & 26 \\
R\_11\_90\_10\_2 & 165 & 30 & 164 & 30 & 57 & 30 & \textbf{17} & 30 & \textbf{17} & 30 \\
R\_11\_90\_20\_1 & 535 & 65 & 526 & 65 & 179 & 65 & \textbf{48} & 65 & \textbf{48} & 65 \\
R\_11\_90\_20\_2 & 804 & 47 & 750 & 47 & 214 & 47 & 61 & 47 & \textbf{54} & 47 \\
R\_12\_50\_10\_1 & 4518 & 73 & 4230 & 73 & 804 & 73 & \textbf{109} & 73 & \textbf{109} & 73 \\
R\_12\_50\_10\_2 & 1892 & 48 & 1713 & 48 & 404 & 48 & \textbf{74} & 48 & 75 & 48 \\
R\_12\_50\_20\_1 & 4219 & 71 & 3698 & 71 & 908 & 71 & \textbf{226} & 71 & 233 & 71 \\
R\_12\_50\_20\_2 & 3488 & 75 & 2863 & 75 & 715 & 75 & \textbf{106} & 75 & 112 & 75 \\
R\_12\_70\_10\_1 & 829 & 35 & 765 & 35 & 209 & 35 & \textbf{71} & 35 & 72 & 35 \\
R\_12\_70\_10\_2 & 4792 & 86 & 4344 & 86 & 998 & 86 & \textbf{179} & 86 & 183 & 86 \\
R\_12\_70\_20\_1 & 2153 & 61 & 1976 & 61 & 523 & 61 & 135 & 61 & \textbf{129} & 61 \\
R\_12\_70\_20\_2 & 191 & 11 & 171 & 11 & 53 & 11 & \textbf{21} & 11 & \textbf{21} & 11 \\
R\_12\_90\_10\_1 & 2119 & 64 & 1887 & 64 & 504 & 64 & 133 & 64 & \textbf{124} & 64 \\
R\_12\_90\_10\_2 & 1361 & 47 & 1253 & 47 & 346 & 47 & 83 & 47 & \textbf{78} & 47 \\
R\_12\_90\_20\_1 & 2455 & 62 & 2228 & 62 & 612 & 62 & \textbf{153} & 62 & 154 & 62 \\
R\_12\_90\_20\_2 & 1743 & 44 & 1537 & 44 & 397 & 44 & 98 & 44 & \textbf{94} & 44 \\
R\_13\_50\_10\_1 & 2504 & 55 & 2300 & 55 & 622 & 55 & \textbf{124} & 55 & \textbf{124} & 55 \\
R\_13\_50\_10\_2 & 11194 & 95 & 10016 & 95 & 2198 & 95 & \textbf{348} & 95 & 350 & 95 \\
R\_13\_50\_20\_1 & 4247 & 63 & 3728 & 63 & 1000 & 63 & \textbf{211} & 63 & 214 & 63 \\
R\_13\_50\_20\_2 & 6020 & 66 & 5446 & 66 & 1396 & 66 & \textbf{352} & 66 & 366 & 66 \\
R\_13\_70\_10\_1 & 2033 & 56 & 1809 & 56 & 501 & 56 & 155 & 56 & \textbf{152} & 56 \\
R\_13\_70\_10\_2 & 4050 & 64 & 3467 & 64 & 946 & 64 & 218 & 64 & \textbf{207} & 64 \\
R\_13\_70\_20\_1 & 1480 & 32 & 1405 & 32 & 403 & 32 & \textbf{113} & 32 & 117 & 32 \\
R\_13\_70\_20\_2 & 1491 & 46 & 1336 & 46 & 409 & 46 & \textbf{111} & 46 & 112 & 46 \\
R\_13\_90\_10\_1 & 772 & 26 & 719 & 26 & 232 & 26 & 68 & 26 & \textbf{62} & 26 \\
R\_13\_90\_10\_2 & 1913 & 65 & 1765 & 65 & 537 & 65 & 151 & 65 & \textbf{144} & 65 \\
R\_13\_90\_20\_1 & 4629 & 94 & 4237 & 94 & 1215 & 94 & 291 & 94 & \textbf{274} & 94 \\
R\_13\_90\_20\_2 & 9399 & 94 & 8032 & 94 & 2056 & 94 & 440 & 94 & \textbf{437} & 94 \\
R\_14\_50\_10\_1 & 14587 & 82 & 13038 & 82 & 3292 & 82 & 523 & 82 & \textbf{498} & 82 \\
R\_14\_50\_10\_2 & 10952 & 81 & 10490 & 81 & 2799 & 81 & 445 & 81 & \textbf{425} & 81 \\
R\_14\_50\_20\_1 & 6974 & 74 & 6240 & 74 & 1732 & 74 & 304 & 74 & \textbf{300} & 74 \\
R\_14\_50\_20\_2 & TL & 84 & TL & 109 & 4995 & 119 & 947 & 119 & \textbf{854} & 119 \\
R\_14\_70\_10\_1 & TL & 74 & TL & 79 & 6000 & 117 & \textbf{699} & 117 & 727 & 117 \\
R\_14\_70\_10\_2 & TL & 102 & 15782 & 108 & 4231 & 108 & 934 & 108 & \textbf{908} & 108 \\
R\_14\_70\_20\_1 & TL & 87 & TL & 101 & 5481 & 111 & \textbf{981} & 111 & 997 & 111 \\
R\_14\_70\_20\_2 & TL & 71 & TL & 90 & 5302 & 103 & 991 & 103 & \textbf{941} & 103 \\
R\_14\_90\_10\_1 & 3950 & 47 & 3742 & 47 & 1182 & 47 & \textbf{288} & 47 & 293 & 47 \\
R\_14\_90\_10\_2 & 3343 & 47 & 3034 & 47 & 955 & 47 & 211 & 47 & \textbf{210} & 47 \\
R\_14\_90\_20\_1 & 4549 & 63 & 4005 & 63 & 1206 & 63 & 283 & 63 & \textbf{279} & 63 \\
R\_14\_90\_20\_2 & 4104 & 32 & 3500 & 32 & 1002 & 32 & \textbf{189} & 32 & 195 & 32 \\
R\_15\_50\_10\_1 & TL & 35 & TL & 45 & 15679 & 157 & 2980 & 157 & \textbf{2976} & 157 \\
R\_15\_50\_10\_2 & TL & 60 & 16738 & 91 & 4591 & 91 & 1343 & 91 & \textbf{1273} & 91 \\
R\_15\_50\_20\_1 & TL & 41 & TL & 52 & 13302 & 131 & \textbf{2431} & 131 & 2493 & 131 \\
R\_15\_50\_20\_2 & TL & 58 & TL & 70 & 7079 & 107 & \textbf{1769} & 107 & 1847 & 107 \\
R\_15\_70\_10\_1 & TL & 36 & TL & 54 & 7066 & 77 & 1733 & 77 & \textbf{1693} & 77 \\
R\_15\_70\_10\_2 & TL & 42 & TL & 44 & 9519 & 73 & 612 & 73 & \textbf{611} & 73 \\
R\_15\_70\_20\_1 & 15577 & 84 & 13792 & 84 & 4043 & 84 & 829 & 84 & \textbf{826} & 84 \\
R\_15\_70\_20\_2 & TL & 77 & 17748 & 98 & 5696 & 98 & 1015 & 98 & \textbf{960} & 98 \\
R\_15\_90\_10\_1 & TL & 59 & TL & 64 & 7971 & 112 & 1387 & 112 & \textbf{1285} & 112 \\
R\_15\_90\_10\_2 & 10583 & 66 & 9058 & 66 & 2795 & 66 & \textbf{368} & 66 & 374 & 66 \\
R\_15\_90\_20\_1 & TL & 35 & TL & 38 & 6680 & 53 & \textbf{916} & 53 & 929 & 53 \\
R\_15\_90\_20\_2 & TL & 42 & 13316 & 42 & 3548 & 42 & \textbf{862} & 42 & 900 & 42 \\
R\_16\_50\_10\_1 & 16437 & 92 & 14591 & 92 & 4607 & 92 & 1326 & 92 & \textbf{1225} & 92 \\
R\_16\_50\_10\_2 & TL & 13 & TL & 15 & TL & 72 & 4791 & 123 & \textbf{4654} & 123 \\
R\_16\_50\_20\_1 & TL & 45 & TL & 51 & TL & 114 & \textbf{7982} & 219 & 8230 & 219 \\
R\_16\_50\_20\_2 & TL & 17 & TL & 19 & TL & 40 & \textbf{8098} & 183 & 8325 & 183 \\
R\_16\_70\_10\_1 & TL & 37 & TL & 39 & 15197 & 116 & 2796 & 116 & \textbf{2726} & 116 \\
R\_16\_70\_10\_2 & TL & 35 & TL & 38 & 12880 & 87 & \textbf{2353} & 87 & 2375 & 87 \\
R\_16\_70\_20\_1 & TL & 28 & TL & 34 & TL & 76 & \textbf{4684} & 155 & 4820 & 155 \\
R\_16\_70\_20\_2 & TL & 54 & TL & 62 & 8297 & 91 & \textbf{1701} & 91 & 1792 & 91 \\
R\_16\_90\_10\_1 & TL & 34 & TL & 40 & 8539 & 79 & 2135 & 79 & \textbf{2069} & 79 \\
R\_16\_90\_10\_2 & TL & 52 & TL & 53 & 16214 & 107 & \textbf{2298} & 107 & 2358 & 107 \\
R\_16\_90\_20\_1 & TL & 52 & TL & 56 & 11763 & 109 & \textbf{2053} & 109 & 2120 & 109 \\
R\_16\_90\_20\_2 & TL & 25 & TL & 28 & TL & 106 & \textbf{3026} & 126 & 3206 & 126 \\
R\_17\_50\_10\_1 & TL & 18 & TL & 20 & TL & 80 & 7717 & 190 & \textbf{7605} & 190 \\
R\_17\_50\_10\_2 & TL & 9 & TL & 11 & TL & 46 & 5868 & 134 & \textbf{5226} & 134 \\
R\_17\_50\_20\_1 & TL & 2 & TL & 3 & TL & 13 & 9981 & 162 & \textbf{9575} & 162 \\
R\_17\_50\_20\_2 & TL & 28 & TL & 35 & TL & 77 & 6533 & 162 & \textbf{6416} & 162 \\
R\_17\_70\_10\_1 & TL & 10 & TL & 14 & TL & 66 & \textbf{8651} & 151 & 8968 & 151 \\
R\_17\_70\_10\_2 & TL & 11 & TL & 17 & TL & 44 & 8917 & 218 & \textbf{8786} & 218 \\
R\_17\_70\_20\_1 & TL & 5 & TL & 8 & TL & 23 & \textbf{7811} & 131 & 7942 & 131 \\
R\_17\_70\_20\_2 & TL & 17 & TL & 23 & TL & 46 & 16407 & 207 & \textbf{16276} & 207 \\
R\_17\_90\_10\_1 & TL & 9 & TL & 14 & TL & 33 & 7618 & 105 & \textbf{7126} & 105 \\
R\_17\_90\_10\_2 & TL & 18 & TL & 24 & TL & 70 & 5147 & 146 & \textbf{4657} & 146 \\
R\_17\_90\_20\_1 & TL & 13 & TL & 16 & TL & 43 & 5491 & 122 & \textbf{4875} & 122 \\
R\_17\_90\_20\_2 & TL & 18 & TL & 25 & TL & 58 & \textbf{4778} & 85 & 4817 & 85 \\
R\_18\_50\_10\_1 & TL & 18 & TL & 21 & TL & 51 & 9713 & 175 & \textbf{8488} & 175 \\
R\_18\_50\_10\_2 & TL & 13 & TL & 14 & TL & 22 & TL & \textbf{47} & TL & 37 \\
R\_18\_50\_20\_1 & TL & 16 & TL & 20 & TL & 54 & \textbf{9714} & 135 & 10073 & 135 \\
R\_18\_50\_20\_2 & TL & 10 & TL & 13 & TL & 32 & TL & \textbf{80} & TL & 74 \\
R\_18\_70\_10\_1 & TL & 5 & TL & 8 & TL & 27 & TL & \textbf{103} & TL & 99 \\
R\_18\_70\_10\_2 & TL & 7 & TL & 12 & TL & 30 & 9501 & 176 & \textbf{9471} & 176 \\
R\_18\_70\_20\_1 & TL & 5 & TL & 11 & TL & 34 & TL & \textbf{229} & TL & 219 \\
R\_18\_70\_20\_2 & TL & 6 & TL & 10 & TL & 47 & 7410 & 119 & \textbf{7251} & 119 \\
R\_18\_90\_10\_1 & TL & 12 & TL & 14 & TL & 26 & TL & \textbf{125} & TL & 117 \\
R\_18\_90\_10\_2 & TL & 15 & TL & 19 & TL & 35 & \textbf{13448} & 154 & 14248 & 154 \\
R\_18\_90\_20\_1 & TL & 11 & TL & 13 & TL & 58 & TL & \textbf{121} & TL & 115 \\
R\_18\_90\_20\_2 & TL & 5 & TL & 7 & TL & 22 & TL & \textbf{126} & TL & 116 \\
R\_19\_50\_10\_1 & TL & 7 & TL & 13 & TL & 38 & TL & \textbf{127} & TL & 123 \\
R\_19\_50\_10\_2 & TL & 4 & TL & 6 & TL & 21 & TL & \textbf{135} & TL & 134 \\
R\_19\_50\_20\_1 & TL & 0 & TL & 1 & TL & 3 & TL & \textbf{42} & TL & 41 \\
R\_19\_50\_20\_2 & TL & 18 & TL & 21 & TL & 50 & TL & \textbf{153} & TL & 152 \\
R\_19\_70\_10\_1 & TL & 0 & TL & 1 & TL & 7 & TL & 74 & TL & \textbf{75} \\
R\_19\_70\_10\_2 & TL & 4 & TL & 6 & TL & 16 & TL & 75 & TL & \textbf{77} \\
R\_19\_70\_20\_1 & TL & 1 & TL & 2 & TL & 18 & TL & \textbf{85} & TL & \textbf{85} \\
R\_19\_70\_20\_2 & TL & 4 & TL & 5 & TL & 19 & TL & \textbf{126} & TL & 125 \\
R\_19\_90\_10\_1 & TL & 1 & TL & 2 & TL & 12 & 16145 & 125 & \textbf{16079} & 125 \\
R\_19\_90\_10\_2 & TL & 2 & TL & 4 & TL & 8 & TL & \textbf{29} & TL & \textbf{29} \\
R\_19\_90\_20\_1 & TL & 18 & TL & 19 & TL & 35 & TL & \textbf{106} & TL & 103 \\
R\_19\_90\_20\_2 & TL & 17 & TL & 19 & TL & 31 & TL & 119 & TL & \textbf{122} \\
R\_20\_50\_10\_1 & TL & 1 & TL & 1 & TL & 3 & TL & \textbf{22} & TL & 20 \\
R\_20\_50\_10\_2 & TL & 4 & TL & 5 & TL & 10 & TL & \textbf{39} & TL & \textbf{39} \\
R\_20\_50\_20\_1 & TL & 0 & TL & 1 & TL & 3 & TL & \textbf{32} & TL & 31 \\
R\_20\_50\_20\_2 & TL & 4 & TL & 6 & TL & 17 & TL & \textbf{42} & TL & 41 \\
R\_20\_70\_10\_1 & TL & 0 & TL & 1 & TL & 4 & TL & 59 & TL & \textbf{61} \\
R\_20\_70\_10\_2 & TL & 0 & TL & 0 & TL & 2 & TL & \textbf{6} & TL & \textbf{6} \\
R\_20\_70\_20\_1 & TL & 1 & TL & 2 & TL & 7 & TL & \textbf{51} & TL & 49 \\
R\_20\_70\_20\_2 & TL & 0 & TL & 0 & TL & 2 & TL & \textbf{10} & TL & \textbf{10} \\
R\_20\_90\_10\_1 & TL & 8 & TL & 9 & TL & 15 & TL & \textbf{28} & TL & 25 \\
R\_20\_90\_10\_2 & TL & 6 & TL & 7 & TL & 13 & TL & \textbf{34} & TL & 30 \\
R\_20\_90\_20\_1 & TL & 2 & TL & 2 & TL & 8 & TL & 117 & TL & \textbf{119} \\
R\_20\_90\_20\_2 & TL & 0 & TL & 1 & TL & 4 & TL & \textbf{35} & TL & 34 